\definecolor{uuuuuu}{rgb}{0.27,0.27,0.27}
\definecolor{sqsqsq}{rgb}{0.1255,0.1255,0.1255}
\newtheorem{definition}{Definition} [section]
\newtheorem{theorem}[definition]{Theorem}
\newtheorem{lemma}[definition]{Lemma}
\newtheorem{proposition}[definition]{Proposition}
\newtheorem{claim}[definition]{Claim}
\begin{document}
\title{\bf\Large A note on explicit constructions of designs}

\date{\today}

\author{
Xizhi Liu
\thanks{Department of Mathematics, Statistics, and Computer Science, University of Illinois, Chicago, IL, 60607 USA.
email: xliu246@uic.edu.
Research partially supported by NSF awards DMS-1763317 and DMS-1952767.
}
\and
Dhruv Mubayi
\thanks{Department of Mathematics, Statistics, and Computer Science, University of Illinois, Chicago, IL, 60607 USA.
email: mubayi@uic.edu.
Research partially supported by NSF awards DMS-1763317 and DMS-1952767.}
}
\maketitle
\begin{abstract}
An $(n,r,s)$-system is an $r$-uniform hypergraph on $n$ vertices such that every pair of edges has an intersection of size less than $s$.
Using probabilistic arguments, R\"{o}dl and \v{S}i\v{n}ajov\'{a} showed that for all fixed integers $r> s \ge 2$,
there exists an $(n,r,s)$-system with independence number $O\left(n^{1-\delta+o(1)}\right)$ 
for some optimal constant $\delta >0$ only related to $r$ and $s$.
We show that for certain pairs $(r,s)$ with $s\le r/2$ there exists an explicit construction of an $(n,r,s)$-system with independence number
$O\left(n^{1-\epsilon}\right)$, where $\epsilon > 0$ is an absolute constant only related to $r$ and $s$.
Previously this was known only for $s>r/2$ by results of Chattopadhyay and Goodman

\end{abstract}

\section{Introduction}\label{SEC:introduction}
For a finite set $V$ and a positive integer $r$ denote by $\binom{V}{r}$ the collection of all $r$-subsets of $V$.
An $r$-uniform hypergraph ($r$-graph) $\mathcal{H}$ is a family of $r$-subsets of finite set
which is called the vertex set of $\mathcal{H}$ and is denoted by $V(\mathcal{H})$.
A set $I \subset V(\mathcal{H})$ is {\em independent} in $\mathcal{H}$ if it contains no edge of $\mathcal{H}$.
The {\em independence number} of $\mathcal{H}$, denoted by $\alpha(\mathcal{H})$, is the maximum size of an independent set in $\mathcal{H}$.

For integers $n\ge r\ge  s \ge 1$ an {\em $(n,r,s)$-system} (also called design) is
an $r$-graph on $n$ vertices such that every pair of edges has an intersection of size less than $s$.
R\"{o}dl and \v{S}i\v{n}ajov\'{a} \cite{RS94} proved a lower bound for the independence number of an $(n,r,s)$-system,
and moreover, they showed that there exists an $(n,r,s)$-system whose independence number achieves the lower bound up to a
multiplicative constant factor.

\begin{theorem}[R\"{o}dl-\v{S}i\v{n}ajov\'{a} \cite{RS94}]\label{THM:RS-independence-number-design}
For fixed integers $r> s\ge 2$ there exists a constant $c = c(r,s)$ such that every $(n,r,s)$-system has
independence number at least $cn^{\frac{r-s}{r-1}}\left(\log n\right)^{\frac{1}{r-1}}$.
Moreover, there exists a constant $C = C(r,s)$ such that there exists an $(n,r,s)$-system with
independence number at most $Cn^{\frac{r-s}{r-1}}\left(\log n\right)^{\frac{1}{r-1}}$ for every integer $n \ge r$.
\end{theorem}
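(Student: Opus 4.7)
The plan is to handle the lower and upper bounds separately: a universal lower bound by combining a basic edge count with an Ajtai-Koml\'os-Pintz-Spencer-Szemer\'edi (AKPSS) style independence theorem for uncrowded hypergraphs, and a matching construction via the probabilistic method applied to the binomial random $r$-graph.

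For the lower bound, I would first observe that every $s$-subset of $V(\mathcal{H})$ lies in at most one edge of an $(n,r,s)$-system $\mathcal{H}$, so $|\mathcal{H}|\le \binom{n}{s}/\binom{r}{s}$ and the average degree satisfies $d=O(n^{s-1})$. The standard vertex-deletion argument (keep each vertex with probability $p$, remove one vertex per surviving edge, optimize $p$) already yields $\alpha(\mathcal{H})=\Omega(n^{(r-s)/(r-1)})$. To gain the additional $(\log n)^{1/(r-1)}$ factor, I would invoke an AKPSS-style independence theorem: an $r$-graph with average degree $d$ in which every small subset spans few edges satisfies $\alpha \ge c(n/d^{1/(r-1)})(\log d)^{1/(r-1)}$. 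The main technical step is verifying the required uncrowdedness for $(n,r,s)$-systems, which amounts to bounding the number of bounded-size ``degenerate'' subconfigurations using the fact that each $s$-set lies in at most one edge; this is routine but notationally heavy.

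For the upper bound, I would construct the claimed $(n,r,s)$-system probabilistically. Start with the binomial random $r$-graph on $[n]$ with edge probability $p=cn^{s-r}$, so that the expected number of edges is $\Theta(n^s)$ and the expected number of \emph{bad} pairs (pairs of edges sharing at least $s$ vertices) is $\Theta(c^{2}n^s)$; for $c$ sufficiently small, deleting one edge per bad pair leaves $\Theta(n^s)$ edges and produces a valid $(n,r,s)$-system. To bound the independence number, set $a=Cn^{(r-s)/(r-1)}(\log n)^{1/(r-1)}$ with $C$ large; inside a fixed $a$-set the expected number of edges is $\Theta(pa^r)$, while the expected number of bad pairs is $O(p^{2}a^{2r-s})$, and a short calculation shows $p\,a^{r-s}=O\!\bigl(n^{-(r-s)(s-1)/(r-1)}\bigr)\to 0$, so edges strictly dominate bad pairs in every sufficiently large subset. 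A first-moment union bound over all $\binom{n}{a}$ such subsets, exploiting $p\binom{a}{r}\gg a\log n$ for $C$ large, then guarantees with positive probability that \emph{every} $a$-set retains an edge after the deletion step, hence is not independent. The delicate point is balancing the global deletion against local edge survival inside every $a$-set; the two exponent calculations line up compatibly precisely because $s<r$.
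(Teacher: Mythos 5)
Your blueprint---an uncrowded-hypergraph independence theorem for the lower bound and alteration in $G^{(r)}(n,p)$ for the upper bound---is the correct one for this theorem, but both halves contain genuine gaps as written. For the lower bound, the AKPSS theorem and its generalization by Duke, Lefmann and R\"odl give $\alpha \ge c\,(n/d^{1/(r-1)})(\log d)^{1/(r-1)}$ only for hypergraphs that are (perhaps after sampling) essentially \emph{linear}, i.e.\ nearly free of $2$-cycles. An $(n,r,s)$-system with $s\ge 3$ is far from linear: two edges may share up to $s-1\ge 2$ vertices, and the link of any $j$-set with $j<s$ is itself an $(n-j,r-j,s-j)$-system and can have on the order of $n^{s-j}$ edges, so the number of $2$-cycles through a single pair of vertices can be huge. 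Thus the ``verify uncrowdedness, notationally heavy but routine'' step is where the substance of the argument lives: one needs either a preliminary random sparsification that destroys almost all overlapping pairs while retaining enough edges, or a version of the independence theorem formulated for bounded $s$-codegree rather than bounded $2$-cycle count, and only after that analysis do the exponent $(r-s)/(r-1)$ and the extra $(\log n)^{1/(r-1)}$ factor emerge.

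For the upper bound, two steps do not close. First, the quantity you compare against the edge count, $O(p^2 a^{2r-s})$, counts only bad pairs with \emph{both} edges inside the fixed $a$-set $T$; but the global alteration also deletes edges of $T$ that form a bad pair with an edge \emph{outside} $T$, and the expected number of those deletions is $\Theta\bigl(p a^{r}\cdot p n^{r-s}\bigr)=\Theta(c\cdot p a^{r})$, a \emph{constant} fraction of the edges inside $T$ (not a vanishing one). The argument still works for $c$ small, but $p a^{r-s}\to 0$ is the wrong ratio to look at. Second, and more seriously, a first-moment union bound over all $\binom{n}{a}$ subsets does not establish that every $T$ retains an edge \emph{after deletion}. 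One needs, for each fixed $T$, an exponential lower-tail bound on the number of surviving edges in $T$ that beats $e^{a\log n}$; the surviving count is not a sum of independent indicators, since deletion is driven by globally-defined bad pairs. This requires either a genuine concentration argument (e.g.\ Janson-type bounds for the edge count combined with a suitable tail estimate for the removal count) or, as in R\"odl and \v{S}i\v{n}ajov\'a's own proof and as the present paper's introduction points out, the Lov\'asz local lemma. Without one of these tools the ``first-moment union bound'' step is a gap.
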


\begin{definition}
For fixed integers $r\ge s \ge 1$
we say there is an explicitly construction of an $(n,r,s)$-system with property $\mathcal{P}$
if there exists an algorithm $\mathcal{A}$ such that for every integer $n$ as input,
$\mathcal{A}$ runs in time ${\rm poly}(n)$ and outputs an $(n,r,s)$-system with property $\mathcal{P}$.
\end{definition}

Explicit constructions of $(n,r,s)$-systems with certain properties are very useful in theoretical computer science.
For example, in the seminal work of Nisan and Wigderson \cite{NW94},  dense $(n,r,s)$-systems
are used to construct pseudorandom generators (PRGs) (see also \cite{Tre01,RRV02} for more applications).
More recently, explicit constructions of $(n,r,s)$-systems with small independence number were used
to construct extractors for adversarial sources \cite{CGGL20,CG20}.

In this note, we focus on the explicit constructions of $(n,r,s)$-systems with small independence number.
R\"{o}dl and \v{S}i\v{n}ajov\'{a}'s proof of the existence of an $(n,r,s)$-system with small independence number
uses the Lov\'{a}sz local lemma, and hence it does not provide an explicit way to construct them.
Perhaps the first explicit construction of an $(n,3,2)$-system (also called a Steiner triple system)
with independence number $O(n^{1-\epsilon})$ for some absolute constant $\epsilon>0$ is due to
Chattopadhyay, Goodman, Goyal, and Li \cite{CGGL20}. Their proof uses results about cap sets (see \cite{CLP17,EG17}).

\begin{theorem}[Chattopadhyay-Goodman-Goyal-Li \cite{CGGL20}]\label{THM:CGGL-construction-3-2-system}
There exists a constant $C \ge 1$ such that for every integer $n\ge 3$
there exists an explicit construction of an $(n,3,2)$-system with independence number at most $Cn^{0.9228}$.
\end{theorem}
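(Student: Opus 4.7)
The plan is to use the classical cap-set construction. Set $k = \lceil \log_3 n \rceil$ so that $3^{k} \geq n$, take $V = \mathbb{F}_3^{k}$ as the vertex set, and let $\mathcal{H}$ be the $3$-graph whose edges are exactly the $3$-element subsets $\{x,y,z\} \subset V$ satisfying $x+y+z = 0$. If $3^{k} > n$ we simply keep an arbitrary $n$-vertex induced subhypergraph. The construction is plainly polynomial-time in $n$: enumerate all pairs $\{x,y\}$ and include the triple $\{x,y,-x-y\}$ whenever $-x-y \notin \{x,y\}$.

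First I would check the design condition. If $\{x,y,z\}$ and $\{x,y,w\}$ were two distinct edges sharing the pair $\{x,y\}$, then $z = -(x+y) = w$, a contradiction, so any two edges meet in fewer than $2$ vertices and $\mathcal{H}$ is an $(n,3,2)$-system. Next I would identify the independent sets: a set $I \subseteq V$ is independent in $\mathcal{H}$ iff $I$ contains no three distinct elements summing to zero, i.e.\ $I$ is exactly a \emph{cap set} in $\mathbb{F}_3^{k}$.

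The independence bound then follows from the Croot--Lev--Pach / Ellenberg--Gijswijt theorem \cite{CLP17,EG17}, which gives $|I| \leq C' \cdot c^{k}$ for an absolute constant $c < 2.756$. Since $k \leq \log_3 n + 1$, this yields
\[
\alpha(\mathcal{H}) \;\leq\; C' \cdot c^{\log_3 n + 1} \;=\; C \cdot n^{\log_3 c} \;\leq\; C\cdot n^{0.9228},
\]
after rounding $\log_3 c$ up to $0.9228$. Restricting to an $n$-vertex subset of $V$ can only decrease the independence number, so the same bound holds for the final hypergraph.

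The only real work is checking that the numerical value $\log_3 c \leq 0.9228$ falls out of the published cap-set bound; there is no serious obstacle, since the design property is immediate from the uniqueness of $z = -(x+y)$ and the independence bound is handed to us by the cap-set theorem. The essence of the result is therefore the observation that the affine geometry of $\mathbb{F}_3^{k}$ simultaneously encodes both the Steiner-triple-system structure and the cap-set problem, so progress on cap sets translates directly into explicit $(n,3,2)$-systems with small independence number.
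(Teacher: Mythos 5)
Your proof is correct and follows exactly the approach that the paper attributes to Chattopadhyay--Goodman--Goyal--Li (the paper only cites this theorem and remarks that ``their proof uses results about cap sets''): the $\mathbb{F}_3^k$ construction via zero-sum triples, linearity from uniqueness of $z=-(x+y)$, and the Croot--Lev--Pach / Ellenberg--Gijswijt bound giving $\log_3 c\approx 0.9228$. Nothing to add; the argument is sound as written.
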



Later, using results about linear codes \cite{Hoc59,BR60} and Sidorenko's recent bounds on the size of sets in $\mathbb{Z}_{2}^{n}$
containing no $r$ elements that sum to zero \cite{Sid18,Sid20},
Chattopadhyay and Goodman \cite{CG20} extended Theorem~\ref{THM:CGGL-construction-3-2-system} to all integers $r> s \ge 2$
with $s \ge \lceil r/2 \rceil$.

\begin{theorem}[Chattopadhyay-Goodman \cite{CG20}]\label{THM:CG-construction-r-s-system}
There exists a constant $C\ge 1$ such that for every integer $s \ge 2$ and every even integer $r> s$ there exists
an explicit construction of an $(n,r,s)$-system with independence number at most $Cr^4 n^{\frac{2(r-s)}{r}}$.
\end{theorem}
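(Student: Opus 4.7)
The plan is to combine two explicit ingredients mentioned in the statement: a binary BCH code of carefully chosen designed distance (which supplies the vertex set and governs intersections of edges), and Sidorenko's extremal bound on subsets of $\mathbb{F}_2^N$ with no $r$ elements summing to zero (which controls the independence number). The key parameter to tune is the designed distance $d$ of the BCH code, and I will choose it so that (a) the linear independence guarantee from BCH exactly yields the ``pairwise intersection less than $s$'' property, and (b) the ambient dimension of the code is small enough that Sidorenko's bound, re-expressed in terms of $n$, produces the exponent $2(r-s)/r$.

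More precisely, let $m = \lceil \log_2(n+1) \rceil$ and put $t = r - s$. Take a primitive narrow-sense binary BCH code of length $n \le 2^m - 1$ and designed distance $d = 2t+1 = 2(r-s)+1$. Its parity-check matrix $H$ can be written explicitly via a primitive element of $\mathbb{F}_{2^m}$ in ${\rm poly}(n)$ time, and has $n$ columns $v_1,\dots,v_n$ lying in $\mathbb{F}_2^{mt}$ with the defining BCH property that any $d-1 = 2(r-s)$ of them are linearly independent over $\mathbb{F}_2$. Let $V = \{v_1, \dots, v_n\}$ and define $\mathcal{H}$ to be the $r$-graph on $V$ whose edges are exactly the $r$-subsets $\{u_1,\dots,u_r\} \subseteq V$ with $u_1 + \cdots + u_r = 0$ in $\mathbb{F}_2^{mt}$. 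To verify that $\mathcal{H}$ is an $(n,r,s)$-system, suppose edges $e_1 \neq e_2$ intersect in $k$ vertices; then the $2(r-k)$ vectors in $e_1 \triangle e_2$ are distinct and their sum is $(\sum_{u \in e_1} u) + (\sum_{u \in e_2} u) = 0$. By the BCH property, any set of at most $2(r-s)$ columns has nonzero sum, so we must have $2(r-k) > 2(r-s)$, i.e.\ $k < s$, as desired.

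For the independence number, observe that an independent set $I \subseteq V$ of $\mathcal{H}$ is a subset of the ambient space $\mathbb{F}_2^{mt}$ that contains no $r$ distinct elements summing to zero, so Sidorenko's theorem \cite{Sid18,Sid20} applies and gives $|I| \le C r^4 \cdot 2^{mt \cdot (2/r)}$ for an absolute constant $C$. Substituting $mt = (r-s) \lceil \log_2(n+1)\rceil$ yields $|I| \le C' r^4 \, n^{2(r-s)/r}$, which is the desired bound (and explains why the result is nontrivial precisely when $s \ge r/2$). The main obstacle is arithmetic rather than conceptual: one must match the parameters so that the BCH redundancy and the Sidorenko exponent combine to produce exactly $n^{2(r-s)/r}$, which forces the specific choice $d = 2(r-s)+1$; a smaller $d$ fails the intersection condition, while a larger $d$ inflates the dimension and weakens the final bound. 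A secondary point is ensuring that the BCH code of arbitrary length $n$ (not only $n = 2^m - 1$) can be produced explicitly, which is standard by shortening.
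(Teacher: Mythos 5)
The paper does not actually prove Theorem~\ref{THM:CG-construction-r-s-system}; it is cited from Chattopadhyay and Goodman, and the paper only describes their method in one sentence as combining linear codes with Sidorenko's bound on sets in $\mathbb{Z}_2^n$ avoiding $r$-term zero-sums. Your proposal is a correct and reasonably careful reconstruction of exactly that argument: you pick a BCH code of designed distance $2(r-s)+1$ so that linear independence of any $2(r-s)$ parity-check columns forces every pair of zero-sum $r$-sets to intersect in fewer than $s$ vertices (via the symmetric-difference computation, which is sound since $e_1\triangle e_2$ is nonempty and of even size $2(r-k)\le 2r$), and then you invoke Sidorenko's bound in the ambient space $\mathbb{F}_2^{m(r-s)}$ with $m=\lceil\log_2(n+1)\rceil$ to turn $2^{2m(r-s)/r}$ into $O(r^4 n^{2(r-s)/r})$. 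The one point you leave implicit is why the theorem is stated only for even $r$ (the zero-sum-free bound of Sidorenko takes that form for even $r$; the paper's remark handles odd $r$ by padding to $r+1$), but this is consistent with the statement you were asked to prove. In short, this matches the approach the paper attributes to \cite{CG20}, not something proved in this paper itself.
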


{\bf Remark.} For odd $r$ they showed that there exists
an explicit construction of an $(n,r,s)$-system with independence number at most $C(r+1)^4 n^{\frac{2(r+1-s)}{r+1}}$.

Our main results in this note extend Theorem~\ref{THM:CGGL-construction-3-2-system} for certain values of $r$ and $s$
in the range  $s < \lceil r/2 \rceil$ which was not addressed by Theorem\ref{THM:CG-construction-r-s-system}.

Our proof of the first theorem below is based on a recent result about the maximum size of a set in $\mathbb{Z}_{6}^{n}$
that avoids $6$-term arithmetic progressions \cite{PP20}.

\begin{theorem}\label{THM:n-6-2-design-construction}
There exists a constant $C > 0$ such that
for every integer $r\in \{4,5,6\}$ and every integer $n \ge r$
there exists an explicit construction of an $(n,r,2)$-system $\mathcal{H}$ with $\alpha(\mathcal{H}) \le Cn^{0.973}$.
\end{theorem}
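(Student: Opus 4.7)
The strategy parallels that of Theorem~\ref{THM:CGGL-construction-3-2-system}, where cap sets in $\mathbb{F}_3^n$ were turned into a Steiner triple system with small independence number. The role of the cap set bound is played here by the recent theorem of \cite{PP20}: I will use the fact that there is an absolute constant $\epsilon > 0$ with $1 - \epsilon \le 0.973$ such that any subset of $\mathbb{Z}_6^t$ containing no nontrivial $6$-term arithmetic progression has size at most $C \cdot 6^{t(1-\epsilon)}$.

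The first step is to build an $(N, 6, 2)$-system on $N = 6^t$ vertices. Take $V = \mathbb{Z}_6^t$ and form a $6$-uniform hypergraph $\mathcal{H}$ whose edges are certain $6$-APs $\{a + id : 0 \le i \le 5\}$, indexed by a carefully chosen family $\mathcal{F}$ of pairs $(a, d)$. The family $\mathcal{F}$ has to be picked so that (i) $\mathcal{H}$ is linear and (ii) the independence number is controlled by the Petrov-Pohoata bound, yielding $\alpha(\mathcal{H}) \le C N^{1-\epsilon} \le C N^{0.973}$.

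To obtain the cases $r \in \{4, 5\}$, I would replace each $6$-edge $e$ of $\mathcal{H}$ with its lexicographically smallest $r$-subset $f_e \subset e$ and take $\mathcal{H}_r = \{f_e : e \in E(\mathcal{H})\}$ as the new $r$-uniform hypergraph. Linearity persists because $|f_{e_1} \cap f_{e_2}| \le |e_1 \cap e_2| \le 1$; the $f_e$ are distinct because $f_{e_1} = f_{e_2}$ with $e_1 \ne e_2$ would force $|e_1 \cap e_2| \ge r \ge 4 > 1$, contradicting linearity of $\mathcal{H}$; and since $f_e \subseteq e$, any independent set $I$ of $\mathcal{H}_r$ satisfies $f_e \not\subseteq I$, hence $e \not\subseteq I$, so $I$ is also independent in $\mathcal{H}$ and $\alpha(\mathcal{H}_r) \le \alpha(\mathcal{H}) \le C N^{0.973}$. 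For arbitrary $n \ge r$, I would pick $t = \lceil \log_6 n \rceil$ so that $N = 6^t \le 6n$, construct $\mathcal{H}_r$ on $\mathbb{Z}_6^t$, and restrict to any $n$-vertex subset $W \subseteq V$; the restriction remains an $(n, r, 2)$-system and every independent set of the restriction is independent in $\mathcal{H}_r$, so $\alpha \le C (6n)^{0.973} \le C' n^{0.973}$. Since the vertex set and all candidate edges can be enumerated in time polynomial in $n$, the construction is explicit.

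The main obstacle is the first step, namely choosing $\mathcal{F}$. In the cap-set prototype any two distinct $3$-APs in $\mathbb{F}_3^n$ share at most one element because the third element is determined by the other two, so linearity is automatic. Over $\mathbb{Z}_6^t$ this fails because of the zero divisors $2$ and $3$: two $6$-APs with distinct directions $d, d'$ satisfying $d - d' \in \{0, 3\}^t$ share two elements at positions differing by $2$ (since $2(d - d') = 0$), while those with $d - d' \in \{0, 2, 4\}^t$ share two elements at positions differing by $3$ (since $3(d - d') = 0$). Thus the full family of $6$-APs is not a linear hypergraph, whereas overly restrictive choices (such as a single fixed direction) leave the independence number linear in $N$. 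The delicate task is to engineer $\mathcal{F}$ so that it simultaneously (a) neutralizes the above $2$- and $3$-torsion collisions to preserve linearity and (b) remains rich enough that every $\mathcal{H}$-independent set contains no nontrivial $6$-AP — possibly after a slicing or projection argument — so that \cite{PP20} can be applied. Balancing these two opposing requirements is where the core work of the proof lies.
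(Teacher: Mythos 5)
Your concern about linearity is not a misgiving to be dismissed — it identifies a genuine error in the paper's own argument. The paper's Construction $\mathcal{A}(6,k)$ takes as edges \emph{all} $6$-subsets of $\mathbb{Z}_6^k$ that form a $6$-AP, and Proposition~\ref{PROP:A(r,k)-is-n-r-2-system} then asserts this is a $(6^k,6,2)$-system, i.e.\ linear. But the proof of that proposition tacitly assumes that if two $6$-APs $E,E'$ share two points, then (after shifting so that a common point is the base point of both) the common difference of $E'$ is an integer multiple of that of $E$. This deduction is valid over the prime fields $\mathbb{Z}_3$ and $\mathbb{Z}_5$, where every nonzero scalar is invertible, but it fails over $\mathbb{Z}_6$ precisely because of the $2$- and $3$-torsion you describe. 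Concretely, in $\mathbb{Z}_6^2$ take $a=(0,0)$, $d=(1,1)$, $d'=(1,4)$. Both $E=\{a+id : 0\le i\le 5\}=\{(0,0),(1,1),(2,2),(3,3),(4,4),(5,5)\}$ and $E'=\{a+id' : 0\le i\le 5\}=\{(0,0),(1,4),(2,2),(3,0),(4,4),(5,2)\}$ are proper $6$-APs with six distinct elements, yet $E\cap E'=\{(0,0),(2,2),(4,4)\}$ has size $3$. So $\mathcal{A}(6,2)$ is not linear and Proposition~\ref{PROP:A(r,k)-is-n-r-2-system} is false for $r=6$. Even restricting to differences $d$ whose coordinates are all units of $\mathbb{Z}_6$ does not fix this, since $3d=3d'$ holds automatically for any two such $d,d'$, giving two shared points whenever the two APs share a base point.

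In short, the paper does \emph{not} supply the subfamily $\mathcal{F}$ that you correctly say must be engineered; it simply uses the full family and glosses over the failure of linearity. Your proposal is therefore incomplete in the same place the paper is incorrect: the key step of choosing $\mathcal{F}$ (or an alternative construction) so that both linearity and the applicability of the Pach--Palincza bound survive is not carried out by either of you. The reduction steps in your write-up — passing from $r=6$ to $r\in\{4,5\}$ by taking $r$-subsets of each edge, and truncating to arbitrary $n$ — are correct and match Proposition~\ref{PROP:shadow-of-design-is-also-design} and the tail of the paper's proof, but they rest on a $6$-uniform linear base hypergraph that has not yet been exhibited.
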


Using a lemma about the independence number of the product of two hypergraphs we are able to extend Theorem~\ref{THM:n-6-2-design-construction}
to a wider range of $r$ and $s$.

For every integer $s = 3^{\ell_1}4^{\ell_2}5^{\ell_3}6^{\ell_4}+1$, where $\ell_1,\ell_2,\ell_3,\ell_4 \ge 0$ are integers,
define
\begin{align}
R(s) =
\begin{cases}
6(s-1) & {\rm if}\quad \ell_1 = \ell_2 = \ell_3 = 0 \\
5(s-1) & {\rm if}\quad \ell_1 = \ell_2 = 0 \quad{\rm and}\quad \ell_3\neq 0\\
4(s-1) & {\rm if}\quad \ell_1 = 0 \quad{\rm and}\quad \ell_2\neq 0\\
3(s-1) & {\rm if}\quad \ell_1\neq 0
\end{cases}\notag
\end{align}

\begin{theorem}\label{THM:n-6-2-design-construction-graph-product}
For every integer $s$ of the form $3^{\ell_1}4^{\ell_2}5^{\ell_3}6^{\ell_4}+1$, where $\ell_1,\ell_2,\ell_3,\ell_4 \ge 0$ are integers,
and every integer $r$ satisfying $2s \le r \le R(s)$
there exist constants $C = C(\ell_1,\ell_2,\ell_3,\ell_4), \epsilon = \epsilon(\ell_1,\ell_2,\ell_3,\ell_4) > 0$
such that for every integer $n\ge r$ there exists
an explicit construction of an $(n, r, s)$-system
with independence number at most $Cn^{1-\epsilon}$.
\end{theorem}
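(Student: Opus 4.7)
My plan is to construct the desired systems by an iterated tensor-product of the explicit $(m,r_0,2)$-systems provided by Theorem~\ref{THM:n-6-2-design-construction} for $r_0\in\{4,5,6\}$ and by Theorem~\ref{THM:CGGL-construction-3-2-system} for $r_0=3$; all of these have independence number $O(m^{1-\epsilon_0})$ for some absolute constant $\epsilon_0>0$.

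Write $s-1=3^{\ell_1}4^{\ell_2}5^{\ell_3}6^{\ell_4}$, so that $R(s)=r_{\min}(s-1)$ where $r_{\min}\in\{3,4,5,6\}$ is the smallest base with nonzero exponent, and factor $R(s)$ into $K$ integers from $\{3,4,5,6\}$. Let $\mathcal{H}_1,\ldots,\mathcal{H}_K$ be the corresponding explicit $(m,r_0^{(i)},2)$-building blocks on $m=\lceil n^{1/K}\rceil$ vertices each, and form the tensor product $\mathcal{H}^\star=\mathcal{H}_1\otimes\cdots\otimes\mathcal{H}_K$ on $V_1\times\cdots\times V_K$ with edge set $\{e_1\times\cdots\times e_K:e_i\in E(\mathcal{H}_i)\}$. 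For two distinct edges, the intersection $\prod_i|e_i\cap f_i|$ is maximized when exactly one coordinate differs and does so in the factor with edge size $r_{\min}$, giving intersection at most $R(s)/r_{\min}=s-1$. Hence $\mathcal{H}^\star$ is an $(n,R(s),s)$-system.

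To handle $r$ in the range $2s\le r\le R(s)$, partition each $R(s)$-edge of $\mathcal{H}^\star$ into $\lfloor R(s)/r\rfloor$ disjoint $r$-subsets and let $\mathcal{H}'$ be the $r$-uniform hypergraph with these subsets as edges. Two edges of $\mathcal{H}'$ coming from the same $R(s)$-edge are disjoint, and two coming from distinct $R(s)$-edges have intersection at most that of the enclosing $R(s)$-edges, which is less than $s$. Any independent set in $\mathcal{H}'$ omits at least one vertex from every block of every partition, and therefore cannot contain a full $R(s)$-edge, so it is also independent in $\mathcal{H}^\star$; hence $\alpha(\mathcal{H}')\le\alpha(\mathcal{H}^\star)$.

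The crux of the proof is bounding $\alpha(\mathcal{H}^\star)$, and this is where the main obstacle lies. The plan is to establish a product lemma of the form $\alpha(\mathcal{H}_1\otimes\mathcal{H}_2)\le\alpha(\mathcal{H}_1)\cdot n_2+n_1\cdot\alpha(\mathcal{H}_2)$ (or a sufficiently strong weaker variant) by double counting: for every $e_1\in E(\mathcal{H}_1)$ the set $\{w\in V_2:e_1\subseteq I^w\}$ must be independent in $\mathcal{H}_2$ and hence of size at most $\alpha(\mathcal{H}_2)$, while any fiber $I^w\subseteq V_1$ with $|I^w|>\alpha(\mathcal{H}_1)$ contains at least $|I^w|-\alpha(\mathcal{H}_1)$ distinct $\mathcal{H}_1$-edges via greedy extraction. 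Balancing the two bounds and iterating the lemma $K-1$ times from the base estimate $\alpha(\mathcal{H}_i)\le C_0 m^{1-\epsilon_0}$ gives $\alpha(\mathcal{H}^\star)=O(n^{1-\epsilon_0/K})$, which combined with $\alpha(\mathcal{H}')\le\alpha(\mathcal{H}^\star)$ yields the required bound with $\epsilon=\epsilon_0/K$. The hardest step is to make this double counting tight enough to survive the iteration: a naive greedy bound loses factors of $|E(\mathcal{H}_1)|$, so the proof will likely need to exploit finer structural (supersaturation) properties of the tensor-product building blocks, or else refine the product lemma itself so that a polynomial saving is preserved after $K$ applications.
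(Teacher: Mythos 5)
Your overall architecture mirrors the paper's: build the target design as a direct (tensor) product of small low-\(s\)-building blocks, verify via a product lemma that the direct product of linear designs is again a design with the right intersection parameter (your computation that two distinct product-edges intersect in at most \(R(s)/r_{\min}=s-1\) points is exactly what Lemma~\ref{LEMMA:product-of-designs} gives when iterated), pass from \(R(s)\) down to \(r\) with a shadow-type step analogous to Proposition~\ref{PROP:shadow-of-design-is-also-design}, and then bound the independence number of the product.

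The gap is precisely where you flag it, and it is fatal as written. The product-independence inequality you want, \(\alpha(\mathcal{H}_1\Box\mathcal{H}_2)\le\alpha(\mathcal{H}_1)n_2+n_1\alpha(\mathcal{H}_2)\), is not established, and your double-count does not give it: counting pairs \((e_1,w)\) with \(e_1\in\mathcal{H}_1\) and \(e_1\subseteq I^w\) yields \(|I|\le n_2\alpha(\mathcal{H}_1)+|\mathcal{H}_1|\cdot\alpha(\mathcal{H}_2)\). Since an \((m,r_1,s_1)\)-system has \(\Theta(m^{s_1})\) edges, with \(n_1=n_2=m\) and \(\alpha_i=O(m^{1-\epsilon})\) this is \(\Omega(m^{s_1+1-\epsilon})\gg m^2\), i.e.\ weaker than the trivial bound. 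So the ``sufficiently strong weaker variant'' you hope to salvage is not within reach of a naive edge count, and the later iteration and the final \(\epsilon=\epsilon_0/K\) claim are all conditional on a statement you have not proved. The paper avoids this entirely: Lemma~\ref{LEMMA:product-of-designs-independence-number} is proved by dependent random choice (Lemma~\ref{LEMMA:dependent-random-choice-bipartite}), viewing a would-be independent set \(S\subseteq V_1\times V_2\) as a bipartite graph and extracting a large \(U\subseteq V_1\) whose \(r_1\)-subsets all have many common neighbors in \(V_2\); this gives the (much weaker but provable) bound \(\alpha(\mathcal{H}_1\Box\mathcal{H}_2)<n_1n_2/(f(n_1)/2)^{1/t}\) with a constant \(t\) depending only on the parameters, which is still a genuine polynomial saving. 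A secondary structural difference: the paper builds the product two factors at a time and inducts on \(\sum\ell_i\) (applying the DRC lemma once per step), rather than forming a \(K\)-fold product in one shot; this keeps the bookkeeping of how \(\epsilon\) degrades explicit. To repair your write-up you would need to replace the unproven additive product lemma with some provable version---e.g.\ the paper's DRC-based Lemma~\ref{LEMMA:product-of-designs-independence-number}---and rework the iteration around whatever quantitative saving that lemma actually provides.
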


The following result focusing on $(n,5,4)$-systems uses a different argument and it improves the bound $O(n^{2/3})$ given by
Theorem~\ref{THM:CG-construction-r-s-system}.

\begin{theorem}\label{THM:5-4-design}
There exists a constant $C>0$ such that for every integer $n\ge 5$ there exists an explicit construction of an
$(n,5,4)$-systems with independence number at most $C n^{\log_{3}2} \le Cn^{0.631}$.
\end{theorem}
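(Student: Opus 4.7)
The plan is to build the $(n,5,4)$-system recursively via a tripling construction. Define a sequence of hypergraphs $H_0, H_1, H_2, \ldots$ where $H_0$ is a single edge on $5$ vertices (a $(5,5,4)$-system with $\alpha(H_0)=4$), and $H_{k+1}$ is built from $H_k$ so that $|V(H_{k+1})|=3|V(H_k)|$ and $\alpha(H_{k+1})\le 2\alpha(H_k)$. Unwinding this recursion yields $|V(H_k)|=5\cdot 3^k$ and $\alpha(H_k)\le 4\cdot 2^k$, from which writing $k=\log_3(n/5)$ gives $\alpha(H_k)\le Cn^{\log_3 2}$ for a universal constant $C$. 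For arbitrary $n\ge 5$, I take the smallest $k$ with $5\cdot 3^k\ge n$ and restrict $H_k$ to any $n$-vertex subset, which only decreases the independence number.

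For the recursive step I would let $V(H_{k+1})=V(H_k)\times\{0,1,2\}$ (three disjoint copies of $V(H_k)$) and fix an arbitrary ordering on $V(H_k)$ so that each edge $e\in H_k$ can be viewed as an ordered quintuple $(v_1,v_2,v_3,v_4,v_5)$. The edges of $H_{k+1}$ are of two kinds: (a) for each $i\in\{0,1,2\}$, one copy of $H_k$ placed on the layer $V(H_k)\times\{i\}$; and (b) for each edge $e\in H_k$ and each codeword $\sigma$ in a fixed code $\mathcal{C}\subseteq\{0,1,2\}^5$, the $5$-set $\{(v_j,\sigma_j):j\in[5]\}$. The code $\mathcal{C}$ is required to contain the three constant codewords (ensuring the three layer copies of $H_k$ are in fact edges of the form (b) as well) and to have minimum Hamming distance at least $2$. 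These two properties, together with the inductive hypothesis $|e\cap e'|\le 3$ for distinct $e,e'\in H_k$, guarantee the $(5,4)$-property for $H_{k+1}$ by a short case analysis.

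To bound $\alpha(H_{k+1})\le 2\alpha(H_k)$, given an independent set $I\subseteq V(H_{k+1})$ I would consider the profile $\tau:V(H_k)\to 2^{\{0,1,2\}}$ defined by $\tau(v)=\{i:(v,i)\in I\}$, so that $|I|=\sum_v|\tau(v)|$. The type (a) edges force the sets $\{v:i\in\tau(v)\}$ to be independent in $H_k$ for each $i$, giving the three bounds $|\{v:i\in\tau(v)\}|\le\alpha(H_k)$. Independence with respect to type (b) edges amounts to the constraint $\prod_{v\in e}\tau(v)\cap\mathcal{C}=\emptyset$ for every $e\in H_k$. The target is to choose $\mathcal{C}$ so that these constraints combine to yield $|I|\le 2\alpha(H_k)$; a sufficient per-edge condition of the form $\sum_{v\in e}|\tau(v)|\le 10$, combined with a suitable double-counting/averaging over the edges of $H_k$ and the layer bounds above, should then close the argument.

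The main obstacle is selecting the code $\mathcal{C}$ that simultaneously satisfies all three desired properties: minimum distance at least $2$, containing the three constants, and forcing the sharp per-edge bound $\sum_{v\in e}|\tau(v)|\le 10$. A natural candidate is an appropriate modification of the parity-check code $\{\sigma\in\mathbb{Z}_3^5:\sigma_1+\cdots+\sigma_5\equiv 0\pmod 3\}$, augmented to include the three constants while preserving the distance constraint by deleting nearby codewords; the per-edge sumset analysis then reduces to a Cauchy--Davenport-style inequality in $\mathbb{Z}_3$. Should the simplest lift fall short of the tight factor $2$, one likely needs to strengthen the inductive hypothesis, tracking finer structural information about the extremal independent sets of $H_k$ rather than just $\alpha(H_k)$, in order to close the recursion at the exponent $\log_3 2$.
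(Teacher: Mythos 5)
Your recursive framework (tripling the vertex set while doubling the independence number, to hit exponent $\log_3 2$) matches the paper's, but the way you populate the cross-layer edges has a fatal structural flaw. In your construction every edge of $H_{k+1}$, whether of type (a) or type (b), projects under the map $(v,i)\mapsto v$ onto an edge of $H_k$. Consequently, if $J\subseteq V(H_k)$ is any independent set of $H_k$, then $J\times\{0,1,2\}$ contains no edge of $H_{k+1}$: a lifted edge $\{(v_j,\sigma_j):j\in[5]\}$ sitting inside $J\times\{0,1,2\}$ would force $\{v_1,\dots,v_5\}\subseteq J$, contradicting independence of $J$. So $\alpha(H_{k+1})\ge 3\,\alpha(H_k)$ always, and since $|V(H_{k+1})|=3|V(H_k)|$, the ratio $\alpha(H_k)/|V(H_k)|$ can never decrease along the recursion. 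Starting from $H_0$ a single edge gives $\alpha(H_k)\ge \tfrac45|V(H_k)|$, which is linear in $n$, not $O(n^{\log_3 2})$. No choice of code $\mathcal{C}\subseteq\{0,1,2\}^5$ can repair this, because the obstruction comes from the fact that the code merely relabels layers of an existing edge of $H_k$.

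The paper escapes this trap by defining the cross-layer edges \emph{independently of the edge set of $\mathcal{H}_k$}. It embeds the three copies $V_1,V_2,V_3$ of $V(\mathcal{H}_k)$ into $\mathbb{F}_{2^\ell}\setminus\{0\}$ and declares $\{a_1,b_1,a_2,b_2,c\}$ with $a_1,b_1\in V_1$, $a_2,b_2\in V_2$, $c\in V_3$ to be an edge whenever $a_1+b_1c=a_2+b_2c$ in the field. These edges have layer profile $(2,2,1)$ and are governed by field arithmetic rather than by $\mathcal{H}_k$, so a diagonal set $\psi_1(J)\cup\psi_2(J)\cup\psi_3(J)$ with $|J|=\alpha(\mathcal{H}_k)$ is not automatically independent: once $|\psi_1(J)|\cdot|\psi_2(J)|>2^\ell$ and $\psi_3(J)\neq\emptyset$, a pigeonhole argument produces a cross edge inside it. A secondary point worth noting: even with this fix the paper does not prove the clean recursion $\alpha(\mathcal{H}_{k+1})\le 2\alpha(\mathcal{H}_k)$ that you aim for. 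It gets $\alpha(\mathcal{H}_{k+1})\le 2\alpha(\mathcal{H}_k)+\sqrt{2}\cdot 3^{k/2}$, where the correction term comes from the pigeonhole threshold $\sqrt{2^\ell}$, and closes the recursion by carrying the strengthened inductive hypothesis $\alpha(\mathcal{H}_k)\le 7\cdot 2^k-\tfrac{\sqrt2}{2-\sqrt3}3^{k/2}$. So your plan to strengthen the induction was the right instinct, but the construction itself must first be rebuilt so that cross edges do not all project to edges of $H_k$.
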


We prove Theorems~\ref{THM:n-6-2-design-construction} and~\ref{THM:n-6-2-design-construction-graph-product} in Section~\ref{SEC:n-r-2-design},
and prove Theorem~\ref{THM:5-4-design} in Section~\ref{SEC:5-4-system}.

\section{Proofs of Theorems~\ref{THM:n-6-2-design-construction} and~\ref{THM:n-6-2-design-construction-graph-product}}\label{SEC:n-r-2-design}
\subsection{Proof of Theorems~\ref{THM:n-6-2-design-construction}}
Let us first introduce a construction of $r$-graphs based on $r$-term arithmetic progressions ($r$-AP) over $\mathbb{Z}_{r}^{k}$. We do not allow trivial progressions so an $r$-AP has $r$ distinct elements.

{\bf Construction $\mathcal{A}(r,k)$.}
Let $r\ge 3$ and $k\ge 1$ be integers.
The hypergraph $\mathcal{A}(r,k)$ is the $r$-graph with vertex set $V = \mathbb{Z}_{r}^{k}$ and edge set
\begin{align}
\left\{\{v_1,\ldots, v_r\}\in \binom{V}{r}\colon v_1,\ldots, v_r \text{ form an $r$-AP}\right\}. \notag
\end{align}

{\bf Remarks.}
\begin{itemize}
    \item It is clear that $\mathcal{A}(r,k)$ can be constructed in time ${\rm poly}\left(r^k\right)$ for all integers $r,k\ge 1$.
    \item Even though we defined $\mathcal{A}(r,k)$ for all integers $r\ge 3$, in the
    proof of Theorem~\ref{THM:n-6-2-design-construction} we will consider only the case $r=6$.
\end{itemize}

The following easy proposition shows that for every integer $r\ge 3$ the
hypergraph $\mathcal{A}(r,k)$ is {\em linear}, i.e.
every pair of edges has an intersection of size at most one.

\begin{proposition}\label{PROP:A(r,k)-is-n-r-2-system}
Let $r\ge 3$, $k\ge 1$ be integers and $n = r^k$.
Then $\mathcal{A}(r,k)$ is an $(n,r,2)$-system.
\end{proposition}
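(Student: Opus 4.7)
The plan is to show directly that any two distinct edges of $\mathcal{A}(r,k)$ share at most one vertex. Suppose toward this end that two distinct $r$-APs
\[
P_1 = \{a_1 + i\,d_1 : 0 \le i \le r-1\}, \qquad P_2 = \{a_2 + j\,d_2 : 0 \le j \le r-1\}
\]
share two distinct elements $u, v \in \mathbb{Z}_r^k$. Since each edge consists of $r$ distinct points of $\mathbb{Z}_r^k$, each common difference $d_m$ must have additive order exactly $r$; equivalently, $P_m$ is a coset of the cyclic subgroup $\langle d_m\rangle \le \mathbb{Z}_r^k$ of order $r$.

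Choosing indices $i_m, j_m$ with $u = a_m + i_m d_m$ and $v = a_m + j_m d_m$ for $m \in \{1,2\}$, and subtracting, yields the central algebraic identity
\[
v - u \;=\; s\,d_1 \;=\; t\,d_2, \qquad s, t \in \mathbb{Z}_r \setminus \{0\}.
\]
Because both cosets pass through $u$, we may rewrite $P_m = u + \langle d_m\rangle$, so the conclusion $P_1 = P_2$ reduces to the subgroup equality $\langle d_1\rangle = \langle d_2\rangle$ inside $\mathbb{Z}_r^k$.

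The remaining step is to bootstrap the scalar relation $s\,d_1 = t\,d_2$ to equality of the full cyclic subgroups, and this is where the argument is cleanest when $r$ is prime: then $\langle d_m\rangle$ has prime order, every nonzero element is a generator, and one reads off $\langle v - u\rangle = \langle d_1\rangle = \langle d_2\rangle$ at once. I expect the main obstacle to be the composite case $r = 6$, the only value needed to prove Theorem~\ref{THM:n-6-2-design-construction}, since $v-u$ can lie in a proper subgroup of $\langle d_m\rangle$ and hence fails to generate it on its own. A natural resolution is to invoke the Chinese Remainder decomposition $\mathbb{Z}_6^k \cong \mathbb{Z}_2^k \times \mathbb{Z}_3^k$: each $6$-AP corresponds to a two-element coset in $\mathbb{Z}_2^k$ paired with a $3$-AP in the field $\mathbb{Z}_3^k$, so the linearity property can be checked factor by factor, the $\mathbb{Z}_3^k$ component reducing directly to the prime analysis above.
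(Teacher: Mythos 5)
Your reduction to the coset picture---each edge is $u + \langle d \rangle$ for an order-$r$ cyclic subgroup $\langle d\rangle\le\mathbb{Z}_r^k$, and the goal becomes $\langle d_1\rangle = \langle d_2\rangle$---is correct and makes the crux more visible than the paper's own argument, and your proof for prime $r$ is complete. But the CRT repair you suggest for composite $r$ cannot close the gap, because the proposition is in fact \emph{false} for $r=6$ (and for $r=4$). In $\mathbb{Z}_6^2$ take $d_1=(1,0)$ and $d_2=(1,2)$; both have additive order $6$, so $E_1=\langle d_1\rangle=\{(0,0),(1,0),\ldots,(5,0)\}$ and $E_2=\langle d_2\rangle=\{(0,0),(1,2),(2,4),(3,0),(4,2),(5,4)\}$ are distinct edges of $\mathcal{A}(6,2)$, yet $E_1\cap E_2=\{(0,0),(3,0)\}$ has size $2$. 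Under your decomposition $\mathbb{Z}_6^k\cong\mathbb{Z}_2^k\times\mathbb{Z}_3^k$ this is exactly what goes wrong: the $\mathbb{Z}_2^k$-components of $d_1,d_2$ agree while their $\mathbb{Z}_3^k$-components differ, so $\langle d_1\rangle\cap\langle d_2\rangle$ is the full order-$2$ subgroup contributed by the $\mathbb{Z}_2$-factor even though each factor ``behaves'' in isolation. The two factors can fail to match independently, which is precisely what ``checking linearity factor by factor'' overlooks. (With $d_2=(1,3)$ the roles reverse and the intersection even has size $3$.)

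You were right to distrust the composite case, and the paper's own proof breaks at the same conceptual spot, just less visibly. After normalizing $a\in E\cap E'$, it asserts without justification that $E'=\{a,a+id,\ldots,a+(r-1)id\}$ for some integer $i\in[r-1]$, i.e.\ that the common difference of $E'$ is an integer multiple of $d$. That is exactly what fails above: $(1,2)$ is not an integer multiple of $(1,0)$ in $\mathbb{Z}_6^2$. For $r$ prime the proposition is true (edges are affine lines of the vector space $\mathbb{F}_r^k$), and then both your argument and the paper's go through; but for $r\in\{4,6\}$, which include the only value the paper actually feeds into Theorem~\ref{THM:n-6-2-design-construction}, Proposition~\ref{PROP:A(r,k)-is-n-r-2-system} needs a genuine repair (for instance, restricting which $6$-APs are taken as edges so that the resulting hypergraph is linear) before the theorem can be deduced as written.
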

\begin{proof}[Proof of Proposition~\ref{PROP:A(r,k)-is-n-r-2-system}]
Suppose to the contrary that there exist two distinct edges $E,E' \in \mathcal{H}$ such that $|E\cap E'| \ge 2$.
Assume that $E = \{a, a+d, \ldots, a+ (r-1)d\}$ for some $a,d\in \mathbb{Z}_{r}^{k}$ and $d$ is not the zero vector.
Without loss of generality we may assume that $a\in E\cap E'$
(otherwise we can choose an arbitrary element in $E\cap E'$ and rename it as $a$)
and assume that $E' = \{a, a+id, \ldots, a+(r-1)id\}$ for some integer $i\in [r-1]$.
Since $|E'| = r$, the set $\{0,id \pmod{r},\ldots, (r-1)id \pmod{r}\}$ has size $r$.
Therefore, sets $\{0,id \pmod{r},\ldots, (r-1)id\pmod{r}\}$
and $\{0,1,\ldots, r-1\}$ are identical,
which implies that $E = E'$, a contradiction.
Therefore, $\mathcal{A}(r,k)$ is an $(n,r,2)$-system.
\end{proof}

The next proposition shows that in order to prove Theorem~\ref{THM:n-6-2-design-construction} it suffices
to find an explicit construction of an $(n,6,2)$-system with independence number $O(n^{1-\epsilon})$.

\begin{proposition}\label{PROP:shadow-of-design-is-also-design}
Suppose that there exists an $(n,r,s)$-system with independence number at most $\alpha$.
Then there exists an $(n,r',s)$-system with independence number at most $\alpha$ for every integer $r'\in [s+1,r]$.
\end{proposition}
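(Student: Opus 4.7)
The plan is to obtain the desired $(n,r',s)$-system from the given $(n,r,s)$-system $\mathcal{H}$ by the most direct operation available, namely by shrinking each edge to a fixed $r'$-subset of itself. Concretely, for every $E \in \mathcal{H}$ choose an arbitrary $r'$-subset $E^{*}\subseteq E$ (e.g.\ the first $r'$ vertices of $E$ in some fixed ordering of $V(\mathcal{H})$) and let $\mathcal{H}' = \{E^{*} : E \in \mathcal{H}\}$. This is visibly computable in time ${\rm poly}(n)$ from $\mathcal{H}$, so explicitness is preserved.

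Next I would verify two facts. First, $\mathcal{H}'$ is an $(n,r',s)$-system. If $E_1^{*}, E_2^{*}$ are two distinct edges of $\mathcal{H}'$ then the edges $E_1,E_2 \in \mathcal{H}$ they come from must be distinct (otherwise $E_1^{*}=E_2^{*}$); since $\mathcal{H}$ is an $(n,r,s)$-system we have $|E_1 \cap E_2| < s$, and hence $|E_1^{*} \cap E_2^{*}| \le |E_1 \cap E_2| < s$. (The hypothesis $r' \ge s+1$ is also what guarantees that different edges of $\mathcal{H}$ produce different edges of $\mathcal{H}'$, since $E_1^{*}=E_2^{*}$ would force $|E_1 \cap E_2| \ge r' \ge s+1$.)

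Second, $\alpha(\mathcal{H}') \le \alpha(\mathcal{H})$. It suffices to show that every independent set of $\mathcal{H}'$ is also independent in $\mathcal{H}$. If $I \subseteq V(\mathcal{H})$ contained some edge $E \in \mathcal{H}$, then $I$ would also contain $E^{*}\subseteq E$, contradicting independence in $\mathcal{H}'$. Hence independence in $\mathcal{H}'$ implies independence in $\mathcal{H}$, and taking maxima gives $\alpha(\mathcal{H}') \le \alpha(\mathcal{H}) \le \alpha$, as required.

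There is really no serious obstacle here: the only two things that could a priori go wrong are that the shrunken edges might collide or that an independent set of $\mathcal{H}'$ might somehow be larger than one in $\mathcal{H}$, and both are ruled out cleanly by the two observations above. The assumption $r' \ge s+1$ is used precisely once (to prevent edge collisions), and the upper bound $r' \le r$ is needed just so that $r'$-subsets of $r$-edges exist.
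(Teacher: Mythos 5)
Your proposal is correct and takes essentially the same approach as the paper: shrink each edge of $\mathcal{H}$ to an arbitrary $r'$-subset, and observe that any set of more than $\alpha$ vertices contains an edge of $\mathcal{H}$ and hence one of the shrunk edges. You are in fact slightly more careful than the paper (which only asserts that $\mathcal{H}'$ is an $r'$-graph and checks the independence bound) by explicitly verifying the intersection property; the parenthetical about $r'\ge s+1$ preventing edge collisions is a true observation but is not actually needed for either verification.
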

\begin{proof}[Proof of Proposition~\ref{PROP:shadow-of-design-is-also-design}]
Let $\mathcal{H}$ be an $(n,r,s)$-system with independence number at most $\alpha$. Let $V = V(\mathcal{H})$.
Fix an integer $r'\in [s+1, r]$.
Let the $r'$-graph $\mathcal{H}'$ be obtained from $\mathcal{H}$ in the following way:
for every edge $E\in \mathcal{H}$ replace it by an arbitrary $r'$-set $E' \subset E$.
It is clear that $\mathcal{H}'$ is an $r'$-graph on $V$.
Now suppose that $S \subset V$ is a set of size strictly greater than $\alpha$.
Then, by assumption, there exists an edge $E\in \mathcal{H}$ such that $E\subset S$.
It follows from the definition of $\mathcal{H}'$ that there exists $E'\in \mathcal{H}$ such that $E' \subset E\subset S$.
So, $S$ is not an independent set in $\mathcal{H}'$, which implies that $\alpha(\mathcal{H}') \le \alpha$.
\end{proof}

Another ingredient we need for the proof of Theorem~\ref{THM:n-6-2-design-construction}
is the following result due to Pach and Palincza \cite{PP20}.

\begin{theorem}[Pach-Palincza \cite{PP20}]\label{THM:PP20-6-AP}
Suppose that $k$ is a sufficiently large integer.
Then every set of $\mathbb{Z}_{6}^{k}$ of size greater than $(5.709)^k$ contains a $6$-AP.
\end{theorem}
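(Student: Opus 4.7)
The natural plan is to exploit the Chinese Remainder Theorem decomposition $\mathbb{Z}_6^k \cong \mathbb{Z}_2^k \times \mathbb{Z}_3^k$, reducing the problem to the Ellenberg--Gijswijt cap set theorem on the $\mathbb{Z}_3^k$ factor. For a candidate $6$-AP-free set $A \subseteq \mathbb{Z}_6^k$, write each element as a pair $(b,x) \in \mathbb{Z}_2^k \times \mathbb{Z}_3^k$ and let $A_b = \{x : (b,x) \in A\}$ denote the fibers over $\mathbb{Z}_2^k$. Projecting a $6$-AP $(a+id)_{i=0}^{5}$ onto each factor, the $\mathbb{Z}_2^k$-coordinate is constant on even $i$ and constant on odd $i$, while the $\mathbb{Z}_3^k$-coordinate traverses the $3$-AP $\{a^{(3)}, a^{(3)}+d^{(3)}, a^{(3)}+2d^{(3)}\}$ twice. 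Distinctness of the six elements forces $\mathrm{ord}(d) = 6$, i.e.\ $d^{(2)} \neq 0$ and $d^{(3)} \neq 0$, so that positions $\{0,2,4\}$ and positions $\{1,3,5\}$ give two copies of the same $3$-AP lying in two distinct fibers $A_b$ and $A_{b'}$.

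This decomposition yields the following key equivalence: $A$ is $6$-AP-free if and only if for every pair of distinct $b, b' \in \mathbb{Z}_2^k$, the intersection $A_b \cap A_{b'}$ is $3$-AP-free in $\mathbb{Z}_3^k$. The Ellenberg--Gijswijt cap set theorem then gives $|A_b \cap A_{b'}| \le C_3^{\,k}$ with $C_3 \approx 2.756$. To convert this into a bound on $|A| = \sum_b |A_b|$, I would apply Cauchy--Schwarz to the degree sequence $d(x) := |\{b \in \mathbb{Z}_2^k : x \in A_b\}|$:
\begin{equation}
|A|^2 \;=\; \Bigl(\sum_{x \in \mathbb{Z}_3^k} d(x)\Bigr)^2 \;\le\; 3^k \sum_{x} d(x)^2 \;=\; 3^k \sum_{b,b' \in \mathbb{Z}_2^k} |A_b \cap A_{b'}|,
\end{equation}
and split the right-hand side into the diagonal contribution $\sum_b |A_b| \le 6^k$ and the off-diagonal contribution bounded by $4^k \cdot C_3^{\,k}$.

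Carrying out the arithmetic yields $|A| \le (12\, C_3)^{k/2} (1+o(1))^k \approx 5.75^k$, which is close to but slightly weaker than the claimed $5.709^k$. The main obstacle is to close this remaining gap. I would expect this to require replacing the crude Cauchy--Schwarz step by a direct polynomial-method attack on $\mathbb{Z}_6^k$: introduce a six-variable tensor whose support detects the linear system defining a non-trivial $6$-AP, exploiting the ring structure $\mathbb{Z}_6 \cong \mathbb{Z}_2 \times \mathbb{Z}_3$ to factor the defining polynomial into pieces over $\mathbb{F}_2$ and $\mathbb{F}_3$. Bounding its slice rank by counting admissible monomials, and comparing against the diagonal lower bound $|A|$, should yield an asymmetric degree-budget optimization across the six variables (weighted by the periodicities modulo $2$ and modulo $3$). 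Solving this optimization is the technical heart of the argument and the step I expect to be the most delicate; it is there that the specific constant $5.709$ would be forced out, rather than from any off-the-shelf cap set bound.
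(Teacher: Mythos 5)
This statement is the Pach--Palincza theorem, which the paper cites from \cite{PP20} as a black box and does not prove; there is therefore no in-paper proof to compare against, and what matters is whether your argument actually establishes the claimed bound.

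Your reduction via the Chinese Remainder decomposition $\mathbb{Z}_6^k \cong \mathbb{Z}_2^k \times \mathbb{Z}_3^k$ is correct and is indeed the right starting point: a non-degenerate $6$-AP forces a common difference of order exactly $6$, the even and odd positions land in two distinct fibers over $\mathbb{Z}_2^k$, and each set of three positions traces the same $3$-AP in $\mathbb{Z}_3^k$; hence $A$ is $6$-AP-free if and only if $A_b \cap A_{b'}$ is $3$-AP-free for all $b \neq b'$. The Cauchy--Schwarz computation is also carried out correctly and, combined with Ellenberg--Gijswijt, yields $|A| \lesssim (12\,C_3)^{k/2} \approx 5.75^k$, as you note.

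The genuine gap is that $5.75^k$ is strictly weaker than the stated $(5.709)^k$, and the last paragraph of your proposal does not close it: it only speculates that a direct slice-rank / Croot--Lev--Pach argument on $\mathbb{Z}_6^k$, with an asymmetric degree-budget optimization exploiting the $\mathbb{Z}_2 \times \mathbb{Z}_3$ splitting, ought to work. That speculation is in the right spirit (the precise constant in \cite{PP20} does come from a more refined polynomial-method analysis rather than from Cauchy--Schwarz plus an off-the-shelf cap-set bound), but no tensor is written down, no slice-rank estimate is performed, and no optimization is solved, so there is no argument producing $5.709$. As written, the proposal proves a theorem with $5.75^k$ in place of $5.709^k$; to prove the statement as given, the final step must be made rigorous.
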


Now we are ready to prove Theorem~\ref{THM:n-6-2-design-construction}.

\begin{proof}[Proof of Theorem~\ref{THM:n-6-2-design-construction}]
By Proposition~\ref{PROP:shadow-of-design-is-also-design}, it suffices to prove that there exists an
$(n,6,2)$-system $\mathcal{H}$ with $\alpha(\mathcal{H}) = O(n^{0.973})$.

First, for all integers $n$ of the form $6^k$ we let the construction be $\mathcal{H} = \mathcal{A}(6,k)$.
It follows from Proposition~\ref{PROP:A(r,k)-is-n-r-2-system} that $\mathcal{H}$ is an $(n,6,2)$-system.
On the other hand, it follows from the definition of $\mathcal{A}(6,k)$ that a set $S\subset V$ is independent in $\mathcal{A}(6,k)$
iff it does not contain a $6$-AP.
So, by Theorem~\ref{THM:PP20-6-AP}, $|S| \le (5.709)^k$. Therefore, $\alpha(\mathcal{H}) \le (5.709)^k \le n^{0.973}$.

Now suppose that $n$ is not of the form $6^k$. Then let $k$ be the smallest integer such that $n \le 6^k$.
Let $\mathcal{H}$ be any $n$-vertex induced subgraph of $\mathcal{A}(6,k)$.
Then $\alpha(\mathcal{H}) \le \alpha(\mathcal{A}(6,k)) \le (5.709)^k \le 6n^{0.973}$.
\end{proof}

\subsection{Proof of Theorem~\ref{THM:n-6-2-design-construction-graph-product}}
Given two hypergraphs $\mathcal{H}_{1}$ and $\mathcal{H}_{2}$,
the {\em direct product} of $\mathcal{H}_1$ and $\mathcal{H}_2$, denoted by $\mathcal{H}_1\Box \mathcal{H}_2$,
is the hypergraph on $V(\mathcal{H}_1)\times V(\mathcal{H}_2)$ with edge set
\begin{align}
\left\{E_1\times E_2\colon E_1\in\mathcal{H}_1 \text{ and } E_2\in\mathcal{H}_2\right\}, \notag
\end{align}
where $\times$ denotes the usual cartesian product of sets.

{\bf Remark.}
It is clear that there exists an algorithm $\mathcal{A}'$ such that for every input $\left(\mathcal{H}_{1},\mathcal{H}_2\right)$,
$\mathcal{A}'$ runs in time ${\rm poly}\left(|\mathcal{H}_1|\cdot |\mathcal{H}_2|\right)$ and outputs $\mathcal{H}_1\Box \mathcal{H}_2$.

One nice property of the operation defined above is that the direct product of two designs is still a design.

\begin{lemma}\label{LEMMA:product-of-designs}
Suppose that $\mathcal{H}_{1}$ is an $(n_1,r_1,s_1)$-system and $\mathcal{H}_{2}$ is an $(n_2,r_2,s_2)$-system.
Then $\mathcal{H}_1\Box \mathcal{H}_2$ is an $\left(n_1n_2, r_1r_2, \max\{r_1(s_2-1)+1, r_2(s_1-1)+1\}\right)$-system.
\end{lemma}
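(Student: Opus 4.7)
The plan is to verify each of the three parameters of the claimed $\left(n_1n_2,\, r_1r_2,\, \max\{r_1(s_2-1)+1,\, r_2(s_1-1)+1\}\right)$-system in turn. The number of vertices is immediate from the definition, since $V(\mathcal{H}_1\Box\mathcal{H}_2)=V(\mathcal{H}_1)\times V(\mathcal{H}_2)$ has cardinality $n_1n_2$. Uniformity is also easy: every edge has the form $E_1\times E_2$ with $|E_1|=r_1$ and $|E_2|=r_2$, and hence $|E_1\times E_2|=r_1r_2$.

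The substantive part is the bound on pairwise intersections. The key algebraic observation I will use is the identity
\begin{align}
(E_1\times E_2)\cap (E_1'\times E_2') = (E_1\cap E_1')\times (E_2\cap E_2'), \notag
\end{align}
so that the intersection size factors as $|E_1\cap E_1'|\cdot |E_2\cap E_2'|$.

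Now I will consider two distinct edges $E_1\times E_2$ and $E_1'\times E_2'$ in $\mathcal{H}_1\Box\mathcal{H}_2$ and split into two cases depending on whether the first or the second factor is the one that differs. If $E_1\neq E_1'$, then the design property of $\mathcal{H}_1$ gives $|E_1\cap E_1'|\le s_1-1$, while trivially $|E_2\cap E_2'|\le r_2$, so the intersection has size at most $r_2(s_1-1)$. Symmetrically, if $E_2\neq E_2'$, then $|E_1\cap E_1'|\cdot |E_2\cap E_2'| \le r_1(s_2-1)$. Since at least one of the two cases must occur when the edges are distinct, in all cases the intersection is bounded by $\max\{r_1(s_2-1),\, r_2(s_1-1)\}$, which is strictly less than $\max\{r_1(s_2-1)+1,\, r_2(s_1-1)+1\}$, as required.

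There is no real obstacle here; the argument is a straightforward bookkeeping of the two coordinates. The only mildly delicate point is being careful that at least one of $E_1\neq E_1'$ or $E_2\neq E_2'$ must hold for distinct edges, and then using the correct design bound on that coordinate while using the trivial uniformity bound on the other.
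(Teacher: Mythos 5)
Your proof is correct and takes essentially the same approach as the paper: both use the factorization $(E_1\times E_2)\cap(E_1'\times E_2')=(E_1\cap E_1')\times(E_2\cap E_2')$ and split on which coordinate differs, applying the design bound there and the trivial uniformity bound on the other. The paper phrases it as a contradiction from an $s$-set lying in two edges, while you bound the intersection size directly, but the argument is the same.
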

\begin{proof}[Proof of Lemma~\ref{LEMMA:product-of-designs}]
Let $n = n_1n_2$, $r = r_1r_2$, and $s = \max\{r_1(s_2-1)+1, r_2(s_1-1)+1\}$.
It is clear that $\mathcal{H}_1\Box \mathcal{H}_2$ is an $r$-graph on $n$ vertices.
So it suffices to show that every $s$-set of $V(\mathcal{H}_1)\times V(\mathcal{H}_2)$
is contained in at most one edge in $\mathcal{H}_1\Box \mathcal{H}_2$.

Fix an $s$-set $S\subset V(\mathcal{H}_1)\times V(\mathcal{H}_2)$.
Suppose to the contrary that there exist two distinct edges $E,E' \in \mathcal{H}_1\Box \mathcal{H}_2$
such that $S\subset E\cap E'$.
Assume that $E = E_1\times E_2$ and $E' = E_1'\times E_2'$, where $E_1, E_1' \in \mathcal{H}_1$, $E_2, E_2' \in \mathcal{H}_{2}$,
and $(E_1, E_2)\neq (E_1', E_2')$.
Since $E\cap E' = (E_1\cap E_1')\times (E_2\cap  E_2')$, we have $|E\cap E'| = |E_1\cap E_1'| \times |E_2\cap E_2'|$.
On the other hand, since $(E_1, E_2)\neq (E_1', E_2')$, we have either $E_1 \neq E_1'$ or $E_2\neq E_2'$.
In the former case we have $|E\cap E'| = |E_1\cap E_1'| \times |E_2\cap E_2'| \le r_2(s_1-1)< s$,
and in the latter case we have $|E\cap E'| = |E_1\cap E_1'| \times |E_2\cap E_2'| \le r_1(s_2-1)< s$,
both contradict the assumption that $S\subset E\cap E'$ and $|S| = s$.
\end{proof}

Next, we will show that the independence number of the direct product of two hypergraphs
with small independence number is still relatively small.
To prove this we will use the following bipartite version of the Dependent random choice lemma.
Its proof is basically the same as proofs in \cite{FS11,KR01,AKS03,Sud03},
and for the sack of completeness we include it here.

\begin{lemma}[Dependent random choice, see \cite{FS11,KR01,AKS03,Sud03}]\label{LEMMA:dependent-random-choice-bipartite}
Let $a,m,n_1,n_2,r$ be positive integers and $d_1 \ge 0$ be a real number.
Let $G = G[V_1,V_2]$ be a bipartite graph with $|V_1| = n_1$, $|V_2| = n_2$, and $|G| \ge d_1n_1$.
If there exists a positive integer $t$ such that
\begin{align}
\frac{n_1d_1^t}{n_2^{t}} -  \binom{n_1}{r}\left(\frac{m}{n_2}\right)^t \ge a. \notag
\end{align}
Then there exists a subset $U\subset V(G)$ of size at least $a$ such that every set of
$r$ vertices in $U$ has at least $m$ common neighbors.
\end{lemma}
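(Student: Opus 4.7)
The plan is to apply the standard dependent random choice plus alteration argument, working with $V_1$ as the ``target'' side. Sample vertices $v_{1},\ldots,v_{t}$ from $V_{2}$ uniformly and independently with replacement, and let $U_{0}\subseteq V_{1}$ be the set of common neighbors of $\{v_{1},\ldots,v_{t}\}$ in $G$.

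First I would lower bound $\mathbb{E}|U_{0}|$. For each $u\in V_{1}$ with degree $d(u)$, the event $u\in U_{0}$ happens iff every $v_{i}$ lies in the neighborhood $N(u)\subseteq V_{2}$, which has probability $(d(u)/n_{2})^{t}$. Summing over $u$ and applying Jensen's inequality to the convex function $x\mapsto x^{t}$ together with the hypothesis $\sum_{u}d(u)=|G|\ge d_{1}n_{1}$ gives
$$\mathbb{E}|U_{0}|=\sum_{u\in V_{1}}\left(\frac{d(u)}{n_{2}}\right)^{t}\ge n_{1}\left(\frac{\sum_{u}d(u)/n_{1}}{n_{2}}\right)^{t}\ge \frac{n_{1}d_{1}^{t}}{n_{2}^{t}}.$$

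Second I would control the number $X$ of \emph{bad} $r$-subsets of $U_{0}$, where $S\in\binom{V_{1}}{r}$ is called bad if its common neighborhood in $V_{2}$ has size strictly less than $m$. For a fixed $r$-set $S\subseteq V_{1}$, the probability that $S\subseteq U_{0}$ equals $(|N(S)|/n_{2})^{t}$, and when $S$ is bad this is at most $(m/n_{2})^{t}$. Since there are at most $\binom{n_{1}}{r}$ bad sets, linearity of expectation yields
$$\mathbb{E}X\le\binom{n_{1}}{r}\left(\frac{m}{n_{2}}\right)^{t}.$$

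Combining the two bounds with the hypothesis of the lemma gives $\mathbb{E}(|U_{0}|-X)\ge a$, so some realization of $v_{1},\ldots,v_{t}$ satisfies $|U_{0}|-X\ge a$. The proof is finished by alteration: delete one vertex from each bad $r$-subset of $U_{0}$ to obtain a set $U\subseteq V_{1}$ of size at least $a$ in which every $r$-subset has at least $m$ common neighbors in $V_{2}$, as required. The argument is entirely routine dependent random choice; the only mildly delicate point is the Jensen step relating $\mathbb{E}|U_{0}|$ to the average degree $d_{1}$, so I do not anticipate a genuine obstacle.
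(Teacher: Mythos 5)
Your proof is correct and matches the paper's argument essentially verbatim: both sample $t$ vertices from $V_2$ with repetition, take the common neighborhood in $V_1$, lower bound its expected size by Jensen/convexity using the average degree $d_1$, upper bound the expected number of bad $r$-subsets, and finish by the alteration step; only the names of the random variables differ.
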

\begin{proof}[Proof of Lemma~\ref{LEMMA:dependent-random-choice-bipartite}]
Pick a set $T$ of $t$ vertices from $V_2$ uniformly at random with repetition.
Set $A = N(T) \subset V_1$ and let $X$ denote the cardinality of $A$.
By the linearity of expectation,
\begin{align}
\mathbb{E}[X]
= \sum_{v\in V_1}\left(\frac{|N(v)|}{n_2}\right)^t
= n_{2}^{-t}\sum_{v\in V_1}|N(v)|^{t}
\ge n_{2}^{-t}n_1\left(\frac{\sum_{v\in V_1}|N(v)|}{n_1}\right)^t
= \frac{n_1d_1^t}{n_2^{t}}. \notag
\end{align}
Let $Y$ be the random variable counting the number of subsets $S\subset A$
of size $r$ with fewer than $m$ common neighbors.
For a given such subset $S$ the probability that it is a subset of $A$ equals $\left(\frac{|N(S)|}{n_2}\right)^t$.
Since there are at most $\binom{n_1}{r}$ subsets $S\subset V_1$ of size $r$ for which $|N(S)| < m$,
it follows that
\begin{align}
\mathbb{E}[Y]
\le \binom{n_1}{r}\left(\frac{m}{n_2}\right)^t. \notag
\end{align}
By the linearity of expectation,
\begin{align}
\mathbb{E}[X-Y]
\ge \frac{n_1d_1^t}{n_2^{t}} -  \binom{n_1}{r}\left(\frac{m}{n_2}\right)^t
\ge a. \notag
\end{align}
Hence there exists a choice of $T$ for which the corresponding set $A = N(T)$ satisfies $X-Y \ge a$.
Deleting one vertex from each subset $S$ of $A$ of size $r$ with fewer than $m$ common neighbors.
We let $U$ be the remaining subset of $A$.
The set $U$ has at least $X-Y \ge a$ vertices and all subsets of size $r$ have at least $m$ common neighbors.
\end{proof}

The following lemma gives an upper bound for the independence number of the direct product of two hypergraphs.

\begin{lemma}\label{LEMMA:product-of-designs-independence-number}
Suppose that $\mathcal{H}_{1}$ is an $r_1$-graph on $n_1$ vertices with $\alpha(\mathcal{H}_1) < n_1/f(n_1)$
and $\mathcal{H}_{2}$ is an $r_2$-graph on $n_2$ vertices with $\alpha(\mathcal{H}_2) < n_2/g(n_2)$ for some real numbers $f(n_1), g(n_2)\ge 1$.
Then $\mathcal{H}_1\Box \mathcal{H}_{2}$ is an $r_1r_2$-graph on $n_1n_2$ vertices
with $\alpha(\mathcal{H}_1\Box \mathcal{H}_{2}) < n_1n_2/h(n_1,n_2)$,
where $h(n_1,n_2) = \left(f(n_1)/2\right)^{1/t}$ and $t = \lceil \frac{\log\left(n_1^{r_1-1}f(n_1)/r_1!\right)}{\log g(n_2)} \rceil$.
\end{lemma}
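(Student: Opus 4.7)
\textbf{Proof plan for Lemma~\ref{LEMMA:product-of-designs-independence-number}.}

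The plan is to argue by contradiction using Lemma~\ref{LEMMA:dependent-random-choice-bipartite}. Assume there is an independent set $I \subset V(\mathcal{H}_1)\times V(\mathcal{H}_2)$ of $\mathcal{H}_1\Box \mathcal{H}_2$ with $|I|\ge n_1n_2/h(n_1,n_2)$. Encode $I$ as a bipartite graph $G$ on $V_1 := V(\mathcal{H}_1)$ and $V_2 := V(\mathcal{H}_2)$ by placing an edge $uv$ exactly when $(u,v)\in I$. Then $|G|=|I|\ge d_1 n_1$ with $d_1 := n_2/h(n_1,n_2)$.

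The key structural observation is that if $T = \{u_1,\dots,u_{r_1}\}$ is an edge of $\mathcal{H}_1$ and its common neighborhood $W := \bigcap_{i} N_G(u_i) \subset V_2$ contains an edge $\{v_1,\dots,v_{r_2}\}$ of $\mathcal{H}_2$, then the product $T\times\{v_1,\dots,v_{r_2}\}$ is an edge of $\mathcal{H}_1\Box \mathcal{H}_2$ entirely contained in $I$, contradicting the independence of $I$. Hence, for every edge $T$ of $\mathcal{H}_1$, the common neighborhood $W$ is an independent set of $\mathcal{H}_2$, and thus $|W| < n_2/g(n_2)$.

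Now apply Lemma~\ref{LEMMA:dependent-random-choice-bipartite} with parameters $r := r_1$, $m := \lceil n_2/g(n_2)\rceil$, $a := \lceil n_1/f(n_1)\rceil$, and the integer $t$ from the statement. Using $h^{t} = f(n_1)/2$ we get
\begin{align}
\frac{n_1 d_1^{t}}{n_2^{t}} = \frac{n_1}{h(n_1,n_2)^{t}} = \frac{2n_1}{f(n_1)},\notag
\end{align}
and the choice $t = \lceil \log(n_1^{r_1-1}f(n_1)/r_1!)/\log g(n_2)\rceil$ yields $g(n_2)^{t}\ge n_1^{r_1-1}f(n_1)/r_1!$, so
\begin{align}
\binom{n_1}{r_1}\left(\frac{m}{n_2}\right)^{t} \le \frac{n_1^{r_1}}{r_1!}\cdot g(n_2)^{-t} \le \frac{n_1}{f(n_1)}.\notag
\end{align}
Subtracting, the hypothesis of Lemma~\ref{LEMMA:dependent-random-choice-bipartite} is satisfied with slack $\ge n_1/f(n_1)\ge a$. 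The lemma then delivers a set $U\subset V_1$ with $|U|\ge a > \alpha(\mathcal{H}_1)$, so $U$ must contain some edge of $\mathcal{H}_1$, and every $r_1$-subset of $U$ has at least $m > \alpha(\mathcal{H}_2)$ common neighbors in $V_2$, contradicting the structural observation above.

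The main obstacle is just bookkeeping: choosing $a$, $m$, $t$ so that the raw DRC inequality becomes the stated bound $h(n_1,n_2) = (f(n_1)/2)^{1/t}$. Specifically the factor $1/2$ in $h$ is exactly what leaves room to absorb the $\binom{n_1}{r_1}(m/n_2)^t$ term after the choice of $t$, and the power $1/t$ in $h$ comes from comparing $h^t$ with $f(n_1)$ after the expected-size calculation $n_1 d_1^t/n_2^t = n_1/h^t$. Once these parameters are matched, no further ingenuity is required; the rest is routine.
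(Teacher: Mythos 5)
Your proposal is correct and takes essentially the same route as the paper: set up the auxiliary bipartite graph encoding the candidate set, invoke the dependent random choice lemma with exactly these parameters $d_1=n_2/h$, $m\approx n_2/g$, $a\approx n_1/f$, verify the same inequality driven by $h^t=f/2$ and $g^t\ge n_1^{r_1-1}f/r_1!$, and then pull out an edge $E_1\in\mathcal{H}_1$ inside $U$ and an edge $E_2\in\mathcal{H}_2$ inside $N(E_1)$ to produce $E_1\times E_2$. The only wrinkle is your rounding: with $m=\lceil n_2/g\rceil$ the bound $\binom{n_1}{r_1}(m/n_2)^t\le (n_1^{r_1}/r_1!)g^{-t}$ goes the wrong way unless $n_2/g$ is an integer, and with $a=\lceil n_1/f\rceil$ the final step ``slack $\ge n_1/f\ge a$'' reverses an inequality; the paper sidesteps this by simply plugging in the real values $m=n_2/g$, $a=n_1/f$ (the proof of Lemma~\ref{LEMMA:dependent-random-choice-bipartite} does not actually use integrality of $m,a$), and you could equivalently take $m=\alpha(\mathcal{H}_2)+1\le n_2/g$ and $a=\alpha(\mathcal{H}_1)+1\le n_1/f$. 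This is purely bookkeeping and does not affect the substance of the argument.
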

\begin{proof}[Proof of Lemma~\ref{LEMMA:product-of-designs-independence-number}]
Let $f = f(n_1)$, $g = g(n_2)$,
$t = \lceil \frac{\log\left(n_1^{r_1-1}f/r_1!\right)}{\log g} \rceil$,
$h = h(n_1,n_2) = \left(f/2\right)^{1/t}$,
$d_1 = n_2/h$,
$m = n_2/g$,
and $a = n_1/f$.
Let $S \subset V(\mathcal{H}_1)\times V(\mathcal{H}_2)$ be a set of size $d_1n_1 = n_1n_2/h$.
Define an auxiliary bipartite graph $G = G[V_1,V_2]$ with $V_1 = V(\mathcal{H}_1)$ and $V_2 = V(\mathcal{H}_2)$,
and $u\in V_1$, $v\in V_2$ are adjacent iff $(u,v)\in S$.
Since
\begin{align}
\frac{n_1d_1^t}{n_2^{t}} -  \binom{n_1}{r_1}\left(\frac{m}{n_2}\right)^t - a
& \ge \frac{n_1}{h^t} - \frac{n_1^{r_1}}{r_1!}\frac{1}{g^{t}} - \frac{n_1}{f}  \notag\\
& = n_1\left(\frac{2}{f} - \frac{n_1^{r_1-1}}{r_1!}\frac{1}{g^{t}} - \frac{1}{f} \right)
\ge n_1\left(\frac{2}{f} - \frac{1}{f} - \frac{1}{f} \right) = 0, \notag
\end{align}
it follows from Lemma~\ref{LEMMA:dependent-random-choice-bipartite} that
there exists a set $U\subset V_1$ of size $n_1/f$
such that every $r_1$-subset of $U$ has at least $n_2/g$ common neighbors.
Since $\alpha(\mathcal{H}_1) < n_1/f$, there exists an $r_1$-subset $E_1 \subset U$ such that $E_1\in \mathcal{H}_1$.
Let $W = N(E_1)$.
Since $|W| \ge n_2/g > \alpha(\mathcal{H}_2)$, there exists an $r_2$-subset $E_2\subset W$ such that $E_2 \in \mathcal{H}_2$.
Since every pair $\{u,v\}$ with $u\in E_1$ and $v\in E_2$ is an edge in $G$,
the set $E_1\times E_2$ is contained in $S$.
This implies that $S$ is not an independent set in $\mathcal{H}_1\Box\mathcal{H}_2$
as it contains the edge $E_1\times E_2\in \mathcal{H}_1\Box\mathcal{H}_2$.
Therefore, $\alpha(\mathcal{H}_1\Box \mathcal{H}_{2}) < n_1n_2/h$.
\end{proof}

Now we are ready to prove Theorem~\ref{THM:n-6-2-design-construction-graph-product}.

\begin{proof}[Proof of Theorem~\ref{THM:n-6-2-design-construction-graph-product}]
We prove this theorem by induction on $\sum_{i\in[4]}\ell_i$.
Theorem~\ref{THM:n-6-2-design-construction} shows that the base case $\sum_{i\in[4]}\ell_i = 0$ holds,
so we may assume that $\sum_{i\in[4]}\ell_i\ge 1$.
Let $s = 3^{\ell_1}4^{\ell_2}5^{\ell_3}6^{\ell_4}+1$,
and let us assume, for the sack of simplicity, that $\ell_1 \ge 1$ (the other cases can be proved using a similar argument).
By Proposition~\ref{PROP:shadow-of-design-is-also-design} it suffices to show there is an explicit construction of
an $(n,R(s),s)$-system with independence number $O(n^{1-\epsilon})$.

Fix $n$ and let
$m = \lceil \sqrt{n} \rceil$, $s_1 = 3^{\ell_1-1}4^{\ell_2}5^{\ell_3}6^{\ell_4}+1$, $r_1 = 3(s_1-1)$.
By the induction hypothesis, there exists an explicit construction $\mathcal{H}_1$ of an $(m,r_1,s_1)$-system
with $\alpha(\mathcal{H}_1) \le C_1 m^{1-\epsilon_1}$, where $C_1>0$ and $\epsilon_1>0$ are constants only related to $r_1$ and $s_1$.
On the other hand, by Theorem~\ref{THM:n-6-2-design-construction}, there exists
an explicit construction $\mathcal{H}_2$ of an $(m,3,2)$-system with $\alpha(\mathcal{H}_2) \le C_2 m^{1-\epsilon_2}$,
where $C_2>0$ and $\epsilon_2>0$ are absolute constants.
Let $C = C(C_1,C_2,\epsilon_1, \epsilon_2)>0$ be a sufficiently large constant,
$\epsilon =  \epsilon(C_1,C_2,\epsilon_1, \epsilon_2)>0$ be a sufficiently small constant
($C$ and $\epsilon$ can be determined from the proof below),
and let $\mathcal{H}_3 = \mathcal{H}_1\Box \mathcal{H}_{2}$.
Then by Lemma~\ref{LEMMA:product-of-designs}, $\mathcal{H}_3$ is an $(m^2, 3(s-1), s)$-system.
Applying Lemma~\ref{LEMMA:product-of-designs-independence-number} to $\mathcal{H}_3$ with
$f(m) = m^{\epsilon_1}/C_1$, $g(m) = m^{\epsilon_2}/C_2$ we obtain
$t = \lceil \frac{\log\left(r_1!C_1\right)}{\log C_2}\frac{r_1-1+\epsilon_1}{\epsilon_2}\rceil$,
$h(m,m) = \left(m^{\epsilon_1}/2C_1\right)^{1/t}$, and
$\alpha(\mathcal{H}_3) \le m^2/h(m,m) \le Cn^{1-\epsilon}$
(we can choose $C>0$ to be sufficiently large and $\epsilon>0$ to be sufficiently small
such that the last inequality holds for all integers $n$).
Finally, to obtain an explicit construction of an $(n,3(s-1),s)$-system with independent number at most $C n^{1-\epsilon}$
one just needs to take any $n$-vertex induced subgraph of $\mathcal{H}_3$.
\end{proof}

\section{$(n,5,4)$-systems}\label{SEC:5-4-system}
We prove Theorem~\ref{THM:5-4-design} in this section.

\begin{proof}[Proof of Theorem~\ref{THM:5-4-design}]
We will show that it suffices to choose $C = 21$.
Similar to the proof of Theorem~\ref{THM:n-6-2-design-construction} it suffices to show
an explicit construction of an $(n,5,4)$-system with independence number at most
$7n^{\log_{3}2}-\frac{\sqrt{2}}{2-\sqrt{3}}n^{1/2}$ (this is slightly stronger that what we need)
for all integers $n$ of the form $3^k$, and we will prove it by induction on $k$.

For $k\le 3$ we have $7\left(3^k\right)^{\log_3 2}- \frac{\sqrt{2}}{2-\sqrt{3}}3^{k/2} \ge 3^k$,
so we may assume that $k \ge 4$ and focus on the induction step.
Fix an integer $k$ and let $\mathcal{H}_k$ be a $(3^k,5,4)$-system with
$\alpha(\mathcal{H}_k) \le 7\left(3^k\right)^{\log_3 2}-\frac{\sqrt{2}}{2-\sqrt{3}}3^{k/2} = 7\cdot 2^k-\frac{\sqrt{2}}{2-\sqrt{3}}3^{k/2}$.
Let $\ell \in \mathbb{N}$ such that $2^{\ell} \ge 3^k > 2^{\ell-1}$.
Let $U_1, U_2, U_3$ be three pairwise disjoint copies of $\mathbb{F}_{2^{\ell}}\setminus \{0\}$,
where $\mathbb{F}_{2^{\ell}}$\footnote{It is clear that $\mathbb{F}_{2^{\ell}}$ can be constructed in time ${\rm poly}(2^{\ell})$ for every integer $\ell\ge 1$.}
is the finite field of order $2^{\ell}$ with characteristic $2$.
For $i\in [3]$ let $\psi_i \colon V(\mathcal{H}_k) \to U_i$ be an injection and let $V_i = \psi_{i}(V(\mathcal{H}_{k}))$.
Let $\mathcal{H}_{k+1}$ be the $5$-graph on $V = V_1\cup V_2\cup V_3$ whose edge set is
\begin{align}
\mathcal{H}_{k+1} & = \left\{\{a_1,b_1,a_2,b_2,c\}\in \binom{V}{5}\colon
                         a_1,b_1\in V_1, a_2, b_2 \in V_2, c\in V_3, a_1+b_1\cdot c = a_2+b_2\cdot c\right\} \notag\\
& \qquad \cup \left(\bigcup_{i\in [3]}\psi_i\left(\mathcal{H}_{k}\right)\right). \notag
\end{align}

\begin{claim}\label{CLAIM:5-4-system-H-k+1-A}
$\mathcal{H}_{k+1}$ is a $(3^{k+1},5,4)$-system.
\end{claim}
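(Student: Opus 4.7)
The plan is to verify two things: (i) $\mathcal{H}_{k+1}$ has the right vertex count $3^{k+1}$ and is $5$-uniform; (ii) every pair of distinct edges shares at most $3$ vertices. Part (i) is immediate since $|V_1|+|V_2|+|V_3|=3\cdot 3^k=3^{k+1}$ and both families of edges in the union are $5$-sets by construction, so the work is in (ii). Call an edge of $\mathcal{H}_{k+1}$ a \emph{Type~1} edge if it lies in the first set of the union (defined by the equation $a_1+b_1c=a_2+b_2c$) and a \emph{Type~2} edge if it lies in some $\psi_i(\mathcal{H}_k)$. I would do a case split on the types of the two edges $E,E'$.

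If both edges are Type~2, they are contained in some $V_i$ and $V_j$. If $i=j$, the bijection $\psi_i^{-1}$ maps them to two distinct edges of $\mathcal{H}_k$ whose intersection has size at most $3$ by the inductive hypothesis. If $i\neq j$, the edges lie in disjoint sets, so $|E\cap E'|=0$. If exactly one of $E,E'$ is Type~2, say $E\in\psi_i(\mathcal{H}_k)\subseteq V_i$, then since every Type~1 edge has exactly $2$ vertices in $V_1$, $2$ in $V_2$, and $1$ in $V_3$, the intersection $E\cap E'$ is contained in a single $V_i$-class of $E'$, giving $|E\cap E'|\le 2$.

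The substantive case is when both $E,E'$ are Type~1. Let $(\alpha,\beta,\gamma)=(|E\cap E'\cap V_1|,|E\cap E'\cap V_2|,|E\cap E'\cap V_3|)$. Since $\alpha,\beta\le 2$ and $\gamma\le 1$, the hypothesis $|E\cap E'|\ge 4$ combined with $E\ne E'$ leaves only the triples $(2,2,0)$, $(2,1,1)$, and $(1,2,1)$ to rule out. In each of these cases I would use that the defining equation together with the four shared vertices pins down the remaining (unshared) vertex through a linear relation over $\mathbb{F}_{2^\ell}$ that is invertible because $c\in V_3\subseteq\mathbb{F}_{2^\ell}^{*}$ is nonzero (and, in the $(2,2,0)$ case, because $c\ne c'$ makes $c-c'$ invertible). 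Hence the two edges must already agree, contradicting $E\ne E'$.

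The main obstacle is a bookkeeping subtlety: a $5$-set underlying a Type~1 edge does not come with a canonical labeling of its two $V_1$-vertices as the ordered pair $(a_1,b_1)$ nor of its $V_2$-vertices as $(a_2,b_2)$, so the condition ``$a_1+b_1c=a_2+b_2c$'' is really a disjunction over up to four such labelings. Consequently the Type~1 versus Type~1 case expands into a finite sub-case analysis, and within each sub-case one must use the characteristic-$2$ arithmetic of $\mathbb{F}_{2^\ell}$, the injectivity of the $\psi_i$'s, and the invertibility facts above to conclude $E=E'$. Once all sub-cases are handled, the intersection bound $|E\cap E'|\le 3$ follows in every case, completing the proof that $\mathcal{H}_{k+1}$ is a $(3^{k+1},5,4)$-system.
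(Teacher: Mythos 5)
Your overall strategy matches the paper's: reduce to the intersection profiles $(2,2,0)$, $(2,1,1)$, $(1,2,1)$ after disposing of the easy cases, then use characteristic-$2$ arithmetic over $\mathbb{F}_{2^\ell}$ to pin down the fifth vertex. The Type~2 versus Type~2 and mixed Type cases are handled exactly as the paper (implicitly) handles them, so no issue there.

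However, you flag the labeling subtlety and then assert, without calculation, that ``within each sub-case\dots\ one concludes $E=E'$.'' This is precisely the step that requires care, and as stated it does not close. If one reads the edge relation literally as $a_1+b_1\cdot c = a_2+b_2\cdot c$ with \emph{both} $V_1$-vertices on the left and \emph{both} $V_2$-vertices on the right, then a $(2,2,0)$-profiled $4$-set $\{p,q\}\subset V_1$, $\{r,s\}\subset V_2$ admits up to four candidate $V_3$-vertices, namely $(p+r)/(q+s)$, $(p+s)/(q+r)$ and their reciprocals, which are generically pairwise distinct. So under that reading the construction is not even a $(3^{k+1},5,4)$-system and your sub-case analysis would fail. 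The paper's own $(2,2,0)$ and $(2,1,1)$ computations (``$a+c\cdot e=b+d\cdot e$,'' ``$a+c\cdot d = b+e\cdot d$'') show the intended relation pairs one $V_1$-vertex with one $V_2$-vertex on each side, i.e.\ $a_1+a_2\cdot c = b_1+b_2\cdot c$. Under that reading all four labelings collapse to the single constraint $c=(p+q)/(r+s)$ in characteristic $2$, the fifth vertex really is unique, and your case analysis goes through. You should make this collapse explicit rather than leave it as ``sub-cases are handled,'' since that is where the whole argument lives.
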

\begin{proof}[Proof of Claim~\ref{CLAIM:5-4-system-H-k+1-A}]
Let $S = \{a,b,c,d\} \subset V_1 \cup V_2 \cup V_3$ be a set of size $4$.
It is clear that if $|S\cap V_i| \ge 3$ for some $i\in [3]$ or $|S\cap V_3| \ge 2$,
then $S$ can be contained in at most one edge of $\mathcal{H}_{k+1}$.
So we may assume that $|S\cap V_1|, |S\cap V_2| \le 2$ and $|S\cap V_3| \le 1$.

Suppose that $|S \cap V_1| = |S\cap V_2| = 2$,
and without loss of generality we may assume that $S\cap V_1 = \{a,b\}$ and $S\cap V_2 = \{c,d\}$.
By the definition of $\mathcal{H}_{k+1}$, every vertex $e\in V_3$ that satisfies $\{a,b,c,d,e\}\in \mathcal{H}_{k+1}$
must satisfy $a+ c\cdot e = b + d\cdot e$ or $a+ d\cdot e = d+ c\cdot e$.
Since both equations yield $e = \frac{a+b}{c+d}$ (here we used the fact that $x-y = x+y$ holds for all $x,y\in \mathbb{F}_{2^{\ell}}$),
such vertex $e$ is unique. Therefore, $S$ is contained in at most one edge in $\mathcal{H}_{k+1}$.

Suppose that $|S \cap V_1| = 2$ and $|S\cap V_2| = |S\cap V_3| = 1$.
Without loss of generality we may assume that $S\cap V_1 = \{a,b\}$, $S\cap V_2 = \{c\}$, and $S\cap V_3= \{d\}$.
It is easy to see that every vertex $e \in V$ that satisfies $\{a,b,c,d,e\}\in \mathcal{H}_{k+1}$ must satisfy
\begin{itemize}
\item $e\in V_2$, and
\item $a+ c\cdot d = b + e\cdot d$ or $a+ e\cdot d = b+ c\cdot d$.
\end{itemize}
Since both $a+ c\cdot d = b + e\cdot d$ and $a+ e\cdot d = b+ c\cdot d$ imply $e = \frac{a+b}{d}+ c$
(here we used the fact that $x-y = x+y$ holds for all $x,y\in \mathbb{F}_{2^{\ell}}$ again),
such vertex $e$ is unique. Therefore, $S$ is contained in at most one edge in $\mathcal{H}_{k+1}$.

By symmetry, for the other cases one can show that $S$ is contained in at most one edge in $\mathcal{H}_{k+1}$.
Therefore, $\mathcal{H}_{k+1}$ is a $(3^{k+1},5,4)$-system.
\end{proof}

\begin{claim}\label{CLAIM:5-4-system-H-k+1-B}
$\alpha(\mathcal{H}_{k+1})
\le 2\left(7\cdot 2^k - \frac{\sqrt{2}}{2-\sqrt{3}}3^{k/2}\right) + \sqrt{2}\cdot 3^{k/2}.$
\end{claim}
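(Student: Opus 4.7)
The plan is to bound any independent set $I\subseteq V(\mathcal{H}_{k+1})$ by examining its three restrictions $I_i=I\cap V_i$ separately. Since the inherited copies $\psi_i(\mathcal{H}_k)\subseteq \mathcal{H}_{k+1}$ are isomorphic to $\mathcal{H}_k$, each $I_i$ is independent in this copy, giving $|I_i|\le \alpha(\mathcal{H}_k)$ by the inductive hypothesis. The target bound $|I|\le 2\alpha(\mathcal{H}_k)+\sqrt 2\cdot 3^{k/2}$ thus reduces to showing that at least one of $|I_1|,|I_2|,|I_3|$ is at most $\sqrt 2\cdot 3^{k/2}$, since then the remaining two sum to at most $2\alpha(\mathcal{H}_k)$.

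I will argue by contradiction: assume $|I_i|>\sqrt 2\cdot 3^{k/2}$ for every $i$. Because $\ell$ is chosen with $2^{\ell-1}<3^{k}\le 2^{\ell}$, this forces $|I_2|\cdot|I_3|>2\cdot 3^{k}\ge 2^{\ell}$, and the goal is to produce a type-1 edge of $\mathcal{H}_{k+1}$ inside $I$. Fix distinct $a_1^{*},b_1^{*}\in I_1$ whose field images differ (possible since $|I_1|\ge 2$). Solving the edge equation $\bar{a_1^{*}}+\bar{b_1^{*}}\bar c=\bar{a_2}+\bar{b_2}\bar c$ for $\bar{b_2}$ yields the forbidden-value map
\[
\Phi(c,a_2)\;=\;\bar{b_1^{*}}+\frac{\bar{a_1^{*}}+\bar{a_2}}{\bar c},
\]
and any $b_2\in I_2\setminus\{a_2\}$ with field image $\Phi(c,a_2)$ would complete an
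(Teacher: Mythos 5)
Your proposal is incomplete: it breaks off mid-sentence, and the part that is present sets up an argument that has a genuine gap. You fix two vertices $a_1^{*},b_1^{*}\in I_1$ and then define a ``forbidden-value map'' $\Phi(c,a_2)$ giving the unique field element that $\bar b_2$ would have to equal to close a type-1 edge. To finish along these lines you would need to show that as $(c,a_2)$ ranges over $I_3\times I_2$ the value $\Phi(c,a_2)$ lands inside the image $\bar I_2$ for some choice. But $\Phi$ is not injective in $(c,a_2)$ -- for fixed $\bar a_2\neq \bar a_1^{*}$ the level sets are determined by the single ratio $(\bar a_1^{*}+\bar a_2)/\bar c$, which takes the same value on many pairs -- so $|I_3\times I_2|>2^{\ell}$ does not force $\Phi$ to be surjective, and it certainly does not force its range to meet $\bar I_2$. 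Indeed, in this regime $|I_2|\approx\sqrt{2^{\ell}}\ll 2^{\ell-1}$, so for a fixed $c$ the affine bijection $\bar a_2\mapsto\Phi(c,a_2)$ can easily map all of $\bar I_2$ entirely outside $\bar I_2$. Fixing $a_1^{*},b_1^{*}$ throws away the $|I_1|^2$ freedom that the counting actually needs, and no amount of bookkeeping on the remaining $(c,a_2,b_2)$ degrees of freedom rescues a clean pigeonhole.

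The paper's proof avoids this entirely. After showing that each $s_i=|S\cap V_i|$ exceeds $\sqrt 2\cdot 3^{k/2}$ (the same reduction you made), it \emph{fixes a single} $z\in S_3$ and considers the map $(a,b)\mapsto \bar a+\bar b\,\bar z$ from $S_1\times S_2$ into $\mathbb F_{2^{\ell}}$. Because $s_1s_2>2^{\ell}$, the pigeonhole principle gives two distinct pairs $(a_1,b_1),(a_2,b_2)\in S_1\times S_2$ mapping to the same field element, i.e.\ $\bar a_1+\bar b_1\bar z=\bar a_2+\bar b_2\bar z$; the collision itself is the edge, and one only needs the short check that $a_1\neq a_2$ and $b_1\neq b_2$ (which follows since $\bar z\neq 0$). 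You should vary $(a_1,b_1)$ and $(a_2,b_2)$ simultaneously -- that is the source of the $s_1s_2$ count -- rather than freezing the $V_1$ side, and you should fix the $V_3$ coordinate $z$ rather than let it range, so that the map is into $\mathbb F_{2^{\ell}}$ and pigeonhole applies directly.
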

\begin{proof}[Proof of Claim~\ref{CLAIM:5-4-system-H-k+1-B}]
Suppose to the contrary that there exists an independent set $S \subset V$
of size greater than $2\left(7\cdot 2^k - \frac{\sqrt{2}}{2-\sqrt{3}}3^{k/2}\right) + \sqrt{2}\cdot 3^{k/2}$.
Let $S_i = S\subset V_i$ and $s_i = |S_i \cap V|$ for $i\in [3]$.
Since $S$ is independent in $\mathcal{H}_{k+1}$, $S_i$ must be independent in $\psi_i(\mathcal{H}_{k})$.
Therefore, $s_i \le \alpha(\mathcal{H}_k) \le 7\cdot 2^k - \frac{\sqrt{2}}{2-\sqrt{3}}3^{k/2}$
for $i\in [3]$ and consequently, $s_i > \sqrt{2}\cdot 3^{k/2}$ for $i\in [3]$.
Moreover, we have $s_1 + s_2 > 7\cdot 2^k - \frac{\sqrt{2}}{2-\sqrt{3}}3^{k/2} + \sqrt{2}\cdot 3^{k/2}$ and hence,
$$s_1 \cdot s_2
> \left(7\cdot 2^k - \frac{\sqrt{2}}{2-\sqrt{3}}3^{k/2}\right)\cdot \sqrt{2}\cdot 3^{k/2}
\ge \sqrt{2}\left(7-\frac{\sqrt{2}}{2-\sqrt{3}}\right)\cdot 2^k\cdot 3^{k/2}
\ge 2\cdot 3^k \ge 2^{\ell}.$$
Fix $z \in S_3$.
Since $s_1 s_2 > 2^{\ell}$, by the Pigeonhole principle, there exists distinct elements
$(a_1, b_1), (a_2,b_2)\in S_1 \times S_2$ such that $a_1 + b_1 \cdot z = a_2 + b_2 \cdot z$.
It is easy to see that $a_1 \neq a_2$ and $b_1 \neq b_2$ since otherwise the equation $a_1 + b_1 \cdot z = a_2 + b_2 \cdot z$
would imply $(a_1,b_1) = (a_2,b_2)$, a contradiction.
Therefore, $|\{a_1,a_2,b_1,b_2,z\}| = 5$ and hence, $\{a_1,a_2,b_1,b_2,z\} \in \mathcal{H}_{k+1}$.
However, this implies that $S$ contains an edge in $\mathcal{H}_{k+1}$, a contradiction.
\end{proof}

Claim~\ref{CLAIM:5-4-system-H-k+1-B} shows that
$$\alpha(\mathcal{H}_{k+1})
\le 2\left(7\cdot 2^k - \frac{\sqrt{2}}{2-\sqrt{3}}3^{k/2}\right) + \sqrt{2}\cdot 3^{k/2}
=7\cdot 2^{k+1} - \frac{\sqrt{2}}{2-\sqrt{3}}3^{(k+1)/2}.$$
This completes the proof of the induction step.

Notice that given $\mathcal{H}_{k}$ the $r$-graph $\mathcal{H}_{k+1}$ can be constructed in time
${\rm poly}(|\mathcal{H}_{k}|) + {\rm poly}(2^{\ell}) = {\rm poly}(3^k)$.
So for every integer $k \ge 1$ the $r$-graph $\mathcal{H}_{k}$ can be constructed in time ${\rm poly}(3^k)$.
\end{proof}

\bibliographystyle{abbrv}
\bibliography{explicit_design}

\begin{thebibliography}{10}

\bibitem{AKS03}
N.~Alon, M.~Krivelevich, and B.~Sudakov.
\newblock Tur\'{a}n numbers of bipartite graphs and related {R}amsey-type
  questions.
\newblock volume~12, pages 477--494. 2003.
\newblock Special issue on Ramsey theory.

\bibitem{BR60}
R.~C. Bose and D.~K. Ray-Chaudhuri.
\newblock On a class of error correcting binary group codes.
\newblock {\em Information and Control}, 3:68--79, 1960.

\bibitem{CG20}
E.~Chattopadhyay and J.~Goodman.
\newblock Explicit extremal designs and applications to extractors.
\newblock {\em arXiv preprint arXiv:2007.07772}, 2020.

\bibitem{CGGL20}
E.~Chattopadhyay, J.~Goodman, V.~Goyal, and X.~Li.
\newblock Extractors for adversarial sources via extremal hypergraphs.
\newblock In {\em Proceedings of the 52nd Annual ACM SIGACT Symposium on Theory
  of Computing}, pages 1184--1197, 2020.

\bibitem{CLP17}
E.~Croot, V.~F. Lev, and P.~P. Pach.
\newblock Progression-free sets in {$\Bbb Z^n_4$} are exponentially small.
\newblock {\em Ann. of Math. (2)}, 185(1):331--337, 2017.

\bibitem{EG17}
J.~S. Ellenberg and D.~Gijswijt.
\newblock On large subsets of {$\Bbb F^n_q$} with no three-term arithmetic
  progression.
\newblock {\em Ann. of Math. (2)}, 185(1):339--343, 2017.

\bibitem{FS11}
J.~Fox and B.~Sudakov.
\newblock Dependent random choice.
\newblock {\em Random Structures Algorithms}, 38(1-2):68--99, 2011.

\bibitem{Hoc59}
A.~Hocquenghem.
\newblock Codes correcteurs d'erreurs.
\newblock {\em Chiffers}, 2:147--156, 1959.

\bibitem{KR01}
A.~V. Kostochka and V.~R\"{o}dl.
\newblock On graphs with small {R}amsey numbers.
\newblock {\em J. Graph Theory}, 37(4):198--204, 2001.

\bibitem{NW94}
N.~Nisan and A.~Wigderson.
\newblock Hardness vs. randomness.
\newblock {\em J. Comput. System Sci.}, 49(2):149--167, 1994.

\bibitem{PP20}
P.~P. Pach and R.~Palincza.
\newblock Sets avoiding six-term arithmetic progressions in {$\Bbb Z^n_6$} are
  exponentially small.
\newblock {\em arXiv preprint arXiv:2009.11897}, 2020.

\bibitem{RRV02}
R.~Raz, O.~Reingold, and S.~Vadhan.
\newblock Extracting all the randomness and reducing the error in {T}revisan's
  extractors.
\newblock In {\em Annual {ACM} {S}ymposium on {T}heory of {C}omputing
  ({A}tlanta, {GA}, 1999)}, pages 149--158. ACM, New York, 1999.

\bibitem{RS94}
V.~R\"{o}dl and E.~\v{S}i\v{n}ajov\'{a}.
\newblock Note on independent sets in {S}teiner systems.
\newblock In {\em Proceedings of the {F}ifth {I}nternational {S}eminar on
  {R}andom {G}raphs and {P}robabilistic {M}ethods in {C}ombinatorics and
  {C}omputer {S}cience ({P}ozna\'{n}, 1991)}, volume~5, pages 183--190, 1994.

\bibitem{Sid18}
A.~Sidorenko.
\newblock Extremal problems on the hypercube and the codegree {T}ur\'{a}n
  density of complete {$r$}-graphs.
\newblock {\em SIAM J. Discrete Math.}, 32(4):2667--2674, 2018.

\bibitem{Sid20}
A.~Sidorenko.
\newblock On generalized {E}rd{\H o}s-{G}inzburg-{Z}iv constants for
  {$\Bbb{Z}_2^d$}.
\newblock {\em J. Combin. Theory Ser. A}, 174:105254, 20, 2020.

\bibitem{Sud03}
B.~Sudakov.
\newblock A few remarks on {R}amsey-{T}ur\'{a}n-type problems.
\newblock {\em J. Combin. Theory Ser. B}, 88(1):99--106, 2003.

\bibitem{Tre01}
L.~Trevisan.
\newblock Extractors and pseudorandom generators.
\newblock {\em J. ACM}, 48(4):860--879, 2001.

\end{thebibliography}
\end{document}